\documentclass[12pt]{amsart}

\usepackage{amsmath,amsfonts,amssymb,mathabx,shuffle,latexsym}
\usepackage{enumitem}
\usepackage[usenames, dvipsnames]{xcolor}
\usepackage[backref=page]{hyperref}
\usepackage{ytableau}
\usepackage{tikz-cd}

\setlength{\evensidemargin}{0in} 
\setlength{\textheight}{8.5in}      
\setlength{\textwidth}{6in}    
\setlength{\topmargin}{0in}      
\setlength{\oddsidemargin}{0in}


\newtheorem{theorem}{Theorem}[section]
\newtheorem{lemma}[theorem]{Lemma}

\newtheorem{corollary}[theorem]{Corollary}

\theoremstyle{definition}
\newtheorem{definition}[theorem]{Definition}

\newenvironment{example}
  {\pushQED{\qed}\examplex}
  {\popQED\endexamplex}

\theoremstyle{remark}
\newtheorem{remark}[theorem]{Remark}

\numberwithin{equation}{section}


\DeclareMathOperator{\codim}{codim}
\DeclareMathOperator{\ex}{ex}
\newcommand{\wt}{\ensuremath{\mathrm{wt}}}
\newcommand{\kwt}{\ensuremath{\mathrm{kwt}}}
\newcommand{\rev}{\ensuremath{\mathrm{rev}}}
\newcommand{\inv}{\ensuremath{\mathrm{inv}}}
\newcommand{\schub}{\ensuremath{\mathfrak{S}}}
\newcommand{\groth}{\ensuremath{\mathfrak{G}}}
\newcommand{\cgroth}{\ensuremath{\mathfrak{K}}}
\newcommand{\Fund}{\ensuremath{\mathfrak{F}}}
\newcommand{\glide}{\ensuremath{\mathcal{G}}}

\newcommand{\flatten}{\ensuremath{\mathtt{flat}}}
\newcommand{\reduct}{\ensuremath{\mathtt{reduct}}}

\newcommand{\PD}{\ensuremath{\mathrm{PD}}}
\newcommand{\QPD}{\ensuremath{\mathrm{QPD}}}

\newcommand{\destand}{\ensuremath{\mathrm{dst}}}

\newcommand{\gen}{\ensuremath{\mathrm{gen}}}

\newcommand{\SV}{\ensuremath{\mathrm{SV}}}
\newcommand{\QSV}{\ensuremath{\mathrm{QSV}}}
\newcommand{\QSym}{\ensuremath{\mathrm{QSym}}}
\newcommand{\Poly}{\ensuremath{\mathrm{Poly}}}

\newlength\cellsize \setlength\cellsize{20\unitlength}

\savebox2{%
\begin{picture}(30,30)
\put(0,0){\line(1,0){30}}
\put(0,0){\line(0,1){30}}
\put(30,0){\line(0,1){30}}
\put(0,30){\line(1,0){30}}
\end{picture}}

\newcommand\cellify[1]{\def\thearg{#1}\def\nothing{}%
\ifx\thearg\nothing\vrule width0pt height\cellsize depth0pt%
  \else\hbox to 0pt{\usebox2\hss}\fi%
  \vbox to 30\unitlength{\vss\hbox to 30\unitlength{\hss$#1$\hss}\vss}}

\newcommand\tableau[1]{\vtop{\let\\=\cr
\setlength\baselineskip{-12000pt}
\setlength\lineskiplimit{12000pt}
\setlength\lineskip{0pt}
\halign{&\cellify{##}\cr#1\crcr}}}


\newcommand\turn{
\begin{picture}(10,10)
\thicklines
\put(0,10){\oval(10,10)[br]}
\put(10,0){\oval(10,10)[tl]}
\end{picture}}

\newcommand\tail{
\begin{picture}(10,10)
\thicklines
\put(0,10){\oval(10,10)[br]}
\end{picture}}

\newcommand\cross{
\begin{picture}(10,10)
\thicklines
\put(5,0){\line(0,1){10}}
\put(0,5){\line(1,0){10}}
\end{picture}}

\newcommand\gridify[1]{\vbox to 10\unitlength{\vss\hbox to 10\unitlength{\hss$_{#1}$\hss}\vss}}

\newcommand\pipes[1]{\vtop{\let\\=\cr
\setlength\baselineskip{-10000pt}
\setlength\lineskiplimit{10000pt}
\setlength\lineskip{0pt}
\halign{&\gridify{##}\cr#1\crcr}}}

\usepackage[colorinlistoftodos]{todonotes}

\begin{document}


\title{Decompositions of Grothendieck Polynomials}  

\author[O. Pechenik]{Oliver Pechenik}
\address[OP]{Department of Mathematics, Rutgers University, Piscataway, NJ 08854}
\email{oliver.pechenik@rutgers.edu}

\author[D. Searles]{Dominic Searles}
\address[DS]{Department of Mathematics, University of Southern California, Los Angeles, CA 90089}
\email{dsearles@usc.edu}

\subjclass[2010]{Primary 05E05; Secondary 14M15}

\date{\today}


\keywords{Grothendieck polynomials, glide polynomials, Schubert polynomials, fundamental slide polynomials, multi-fundamental quasisymmetric functions, pipe dreams}

\begin{abstract}
We investigate the longstanding problem of finding a combinatorial rule for the Schubert structure constants in the $K$-theory of flag varieties (in type $A$). The Grothendieck polynomials of A.~Lascoux--M.-P.~Sch\"{u}tzenberger (1982) serve as polynomial representatives for $K$-theoretic Schubert classes; however no positive rule for their multiplication is known outside the Grassmannian case. We contribute a new basis for polynomials, give a positive combinatorial formula for the expansion of Grothendieck polynomials in these \emph{glide polynomials}, and provide a positive combinatorial Littlewood-Richardson rule for expanding a product of Grothendieck polynomials in the glide basis. 
Our techniques easily extend to the $\beta$-Grothendieck polynomials of S.~Fomin--A.~Kirillov (1994), representing classes in connective $K$-theory, and we state our results in this more general context. 

A specialization of the glide basis recovers the fundamental slide polynomials of S.~Assaf--D.~Searles (2016), which play an analogous role with respect to the Chow ring of flag varieties. Additionally, the stable limits of another specialization of glide polynomials are T.~Lam--P.~Pylyavskyy's (2007) basis of multi-fundamental quasisymmetric functions, $K$-theoretic analogues of I.~Gessel's (1984) fundamental quasisymmetric functions. Those glide polynomials that are themselves quasisymmetric are truncations of multi-fundamental quasisymmetric functions and form a basis of quasisymmetric polynomials.
\end{abstract}

\maketitle
\tableofcontents

%
\section{Introduction}
%
\label{sec:introduction}

Let $X={\sf Flags}({\mathbb C}^n)$ be the parameter space of
complete flags 
\[{\mathbb C}^0\subset F_1\subset F_2\subset
\cdots\subset F_{n-1}\subset {\mathbb C}^n.\] 
The space $X$ is a smooth projective complex variety and carries an action of ${\sf GL_n}(\mathbb{C})$ induced from the standard action of ${\sf GL_n}(\mathbb{C})$ on $\mathbb{C}^n$. There are then restricted actions by the {\bf Borel subgroup} ${\sf B}$ of invertible lower triangular matrices and the {\bf maximal torus} ${\sf T}$ of 
invertible diagonal matrices.
The ${\sf T}$-fixed points of $X$ are the flags 
$F_{\bullet}^{(w)}$ defined by $F_k^{(w)}=\langle  e_{w(1)}, e_{w(2)},\ldots,  e_{w(k)}\rangle$, where $e_i$ is the $i$th standard basis vector and $w \in S_n$ is a permutation. Hence the ${\sf T}$-fixed points are naturally indexed by the permutations $w$ in the symmetric group $S_n$. Each ${\sf B}$-orbit of $X$ contains a unique ${\sf T}$-fixed point, and the {\bf Schubert varieties} 
\[X_w := \overline{{\sf B} \cdot F_{\bullet}^{(w)}}\]
 give a cell decomposition of $X$.

Since the structure sheaf $\mathcal{O}_{X_w}$ of a Schubert variety has a resolution by locally free sheaves
\[
0 \to V_k \to V_{k-1} \to \dots \to V_0  \to \mathcal{O}_{X_w} \to 0,
\]
one may thereby define classes 
\[[\mathcal{O}_{X_w}] := \sum_{i=0}^k (-1)^i [V_i] \] in the Grothendieck ring $K(X)$ of algebraic vector bundles over $X$.
Indeed the set $\{ [\mathcal{O}_{X_w}] \}_{w \in S_n}$ of these $K$-theoretic Schubert classes is an additive basis for $K(X)$. Hence the product structure of $K(X)$ (given by tensor product of vector bundles) is encoded in the structure coefficients $C_{u,v}^w$ appearing in 
\[
[\mathcal{O}_{X_u}] \cdot [\mathcal{O}_{X_v}] = \sum_{w \in S_n} C_{u,v}^w  [\mathcal{O}_{X_w}].
\]
It was conjectured by A.~Buch \cite{Buch} and proved by M.~Brion \cite{Brion} that the signs of these coefficients are determined simply by the codimensions of the Schubert varieties in $X$. More precisely, $(-1)^{\ell(w) - \ell(u) - \ell(v)} C_{u,v}^w \geq 0$, where $\ell(w) = \codim_X(X_w)$ (or equivalently the Coxeter length of $w$). 

Since the numbers $(-1)^{\ell(w) - \ell(u) - \ell(v)} C_{u,v}^w$ are nonnegative, one might hope for a combinatorial rule expressing $|C_{u,v}^w|$ as the cardinality of some explicit set of combinatorial objects. Giving such a rule remains a major, long-standing problem in algebraic combinatorics.

We address (but do not solve) this problem. Most important of the available combinatorial tools are the \emph{Grothendieck polynomials} $\groth_w$ (introduced by A.~Lascoux--M.-P.~Sch\"utzenberger \cite{Lascoux.Schutzenberger}), which are polynomial representatives for the $K$-theoretic Schubert classes in $K(X)$, in the sense that
\[
\groth_u \cdot \groth_v = \sum_{w \in S_n} C_{u,v}^w  \groth_w,
\]
with the same structure coefficients as before (cf.\ \cite{Lascoux.Schutzenberger, Fulton.Lascoux}). Indeed more general \emph{$\beta$-Grothendieck polynomials} (introduced by S.~Fomin--A.~Kirillov \cite{Fomin.Kirillov}) play the analogous role with respect to the richer \emph{connective $K$-theory} of $X$ \cite{Hudson}, which, as shown by P.~Bressler--S.~Evens \cite{Bressler.Evens}, is the most general complex-oriented generalized cohomology theory in which the standard method of constructing Schubert classes is well-defined. 

In this paper, we use the philosophy of \cite{Assaf.Searles} to introduce the \emph{glide polynomials}, which refine the $\beta$-Grothendieck polynomials and form a new basis of polynomials. We provide a positive combinatorial formula for the expansion of $\beta$-Grothendieck polynomials in the glide basis, as well as positive combinatorial Littlewood-Richardson rules for the glide expansions of products of glide or $\beta$-Grothendieck polynomials. 

This paper is organized as follows. In Section~\ref{sec:grothendieck_and_glide_polynomials}, we first recall the Grothendieck and $\beta$-Grothendieck polynomials. We then introduce the basis of glide polynomials and give a positive combinatorial rule for expressing $\beta$-Grothendieck polynomials in this basis. Finally, we show that a specialization of the glide polynomials yields precisely the \emph{fundamental slide polynomials} of S.~Assaf--D.~Searles \cite{Assaf.Searles}, which play an analogous role in decomposing Schubert polynomials. 
In Section~\ref{sec:quasi}, we show the stable limits of glide polynomials (specialized to $\beta=1$) are the \emph{multi-fundamental quasisymmetric functions} of T.~Lam--P.~Pylyavskyy \cite{Lam.Pylyavskyy}, a basis of the ring of quasisymmetric functions. Moreover, the glide polynomials refining symmetric $\beta$-Grothendieck polynomials (i.e., those representing classes in Grassmannians) are a new basis of quasisymmetric polynomials and can be seen as (connective) $K$-theoretic analogues of I.~Gessel's \emph{fundamental quasisymmetric polynomials} \cite{Gessel}. We give a positive combinatorial formula for expressing symmetric $\beta$-Grothendieck polynomials in this basis, compacting the set-valued tableau formula of A.~Buch \cite{Buch}. In Section~\ref{sec:multiplication}, we extend a $K$-theoretic analogue of the shuffle product due to T.~Lam--P.~Pylyavskyy \cite{Lam.Pylyavskyy} and use it to present our Littlewood-Richardson rules.

\section{Grothendieck and glide polynomials}\label{sec:grothendieck_and_glide_polynomials}
Here, we recall the Grothendieck polynomials of A.~Lascoux--M.-P.~Sch\"utzenberger \cite{Lascoux.Schutzenberger} and the more general $\beta$-Grothendieck polynomials of S.~Fomin--A.~Kirillov \cite{Fomin.Kirillov}. We then introduce the glide polynomials as certain refinements.

\subsection{Grothendieck polynomials}
While the original definition of Grothendieck polynomials was in terms of divided difference operators, we will follow a more concretely combinatorial description based on work of various authors \cite{Billey.Jockusch.Stanley, Bergeron.Billey, Fomin.Kirillov, Knutson.Miller}. Indeed, we will describe first the more general $\beta$-Grothendieck polynomials introduced by S.~Fomin--A.~Kirillov \cite{Fomin.Kirillov}. 

The $\beta$-Grothendieck polynomials naturally represent Schubert classes in the \emph{connective $K$-theory} of $X$ \cite{Hudson} and specialize to the ordinary Grothendieck polynomials at $\beta = -1$. They moreover specialize at $\beta = 0$ to the \emph{Schubert polynomials}, representing the Schubert classes in the Chow ring of $X$.
While our interest is primarily in these two specializations, we will write most of our theorems for general $\beta$ as a convenient way to describe both theories simultaneously. We find that using general $\beta$ requires little extra complication beyond considering the $\beta = -1$ case. 

We now turn to defining $\beta$-Grothendieck polynomials.
A {\bf pipe dream} $P$ is a tiling of the fourth quadrant of the plane by {\bf crossing pipes} $\cross$ and {\bf turning pipes} $\turn$ that uses finitely-many crossing pipes. The lines of $P$, traveling from the $y$-axis to the $x$-axis, are called {\bf pipes}. We number the pipes by the absolute value of the $y$-coordinate of their left endpoint. In the case that no two pipes of $P$ cross each other more than once, we say $P$ is {\bf reduced}. For any pipe dream $P$, its {\bf reduction} $\reduct(P)$ is the reduced pipe dream obtained by replacing all but the southwestmost $\cross$ between each pair of pipes with $\turn$. Note that if $P$ is reduced, then $\reduct(P) = P$. 

The {\bf permutation} of a reduced pipe dream $P$ is the permutation given by the $x$-coordinates of the right endpoints of the pipes, while the permutation of a nonreduced pipe dream is the permutation of its reduction. The {\bf excess} $\ex(P)$ of a pipe dream $P$ is the number of $\cross$'s in $P$ minus the number of $\cross$'s in $\reduct(P)$. Let $\PD(w)$ denote the set of all pipe dreams for the permutation $w$, and let $\PD_e(w)$ denote the subset of pipe dreams with excess $e$, so that $\PD_0(w)$ denotes the subset of reduced pipe dreams. The {\bf weight} $\wt(P)$ of a pipe dream $P$ is the {\bf weak composition} (i.e., finite sequence of nonnegative integers) $(a_1, a_2, \dots)$, where $a_i$ records the number of $\cross$'s in the $i$th row of $P$ (from the top).

\begin{example}\label{ex:reduced_PDs}
The pipe dream
\[
P = \pipes{%
      & 1 & 2 & 3 & 4\\
      1 & \turn & \cross & \cross & \tail \\
      2 & \turn & \cross & \tail \\
      3 & \cross & \tail \\
      4 &  \tail }
\]
is not reduced since pipes $3$ and $4$ cross twice. Its reduction is the reduced pipe dream 
\[
\reduct(P) = \pipes{ & 1 & 2 & 3 & 4 \\ 
1 & \turn & \cross & \cross & \tail \\
2 & \turn &  \turn & \tail \\
3 &\cross & \tail \\
4 & \tail }
\]
 obtained by removing the second crossing between those pipes. Since $\reduct(P) \in \PD_0(1432)$, we have $P \in \PD_1(1432) \subset \PD(1432)$. The weight of $P$ is the weak composition $(2,1,1)$, while the weight of $\reduct(P)$ is $(2,0,1)$.
\end{example}

For $w \in S_n$, the {\bf $\beta$-Grothendieck polynomial} $\cgroth_w^{(\beta)}$ is the following generating function for pipe dreams of $w$:
\[
\cgroth_w^{(\beta)} := \sum_{P \in \PD(w)} \beta^{\ex(P)} x^{\wt(P)},
\]
where $x^a := x_1^{a_1}x_2^{a_2} \dots$. Here we treat $\beta$ as a formal parameter. 
Two specializations of $\cgroth_w^{(\beta)}$ are particularly significant: For $\beta = -1$, the {\bf Grothendieck polynomials} \[\groth_w := \cgroth_w^{(-1)}\] represent the Schubert classes in $K(X)$, while for $\beta = 0$, the {\bf Schubert polynomials} 
\[
\schub_w := \cgroth_w^{(0)} = \sum_{P \in \PD_0(w)} x^{\wt(P)}
\]
represent the Schubert classes in the Chow ring of $X$; this reflects the fact that Chow rings are isomorphic to the associated graded algebras of $K$-theory rings (at least after tensoring with $\mathbb{Q}$). Henceforth, we optionally drop $\beta$ from the notation, unless it is specialized to a particular value.

\subsection{Glide polynomials}

Given a weak composition $a$, the {\bf flattening} of $a$ is the (strong) composition $\flatten(a)$ obtained by deleting all zero terms from $a$. For example, $\flatten(0102) = 12$. 

Given weak compositions $a$ and $b$ of length $n$, say that $b$ {\bf dominates} $a$, denoted by $b \geq a$, if
\begin{equation*}
  b_1 + \cdots + b_i \ge a_1 + \cdots + a_i
\end{equation*}
for all $i=1,\ldots,n$. For example, $0120 \ge 0111$. Note that this partial ordering on weak compositions extends the usual dominance order on partitions.

Define a {\bf weak komposition} to be a weak composition where the positive integers may be colored arbitrarily black or red.
The {\bf excess} $\ex(a)$ of a weak komposition $a$ is the number of red entries in $a$. 



\begin{definition}
Let $a$ be a weak composition with nonzero entries in positions $n_j$.
The weak komposition $b$ is a {\bf glide} of $a$ if there exist nonnegative integers $i_1 < \dots < i_\ell$ such that we have
\begin{itemize}
\item $b_{i_j + 1} + \dots + b_{i_{j+1}} = \flatten(a)_{j+1} + \ex(b_{i_j+1}, \dots, b_{i_{j+1}})$,
\item $i_{j+1} \leq n_{j+1}$, and
\item $b_{i_j + 1}$ is black.
\end{itemize} 
\end{definition}

\begin{example}
Let $a=(0,1,0,0,0,3)$. The weak kompositions $(1,1,{\color{red}1},0,{\color{red}1},{\color{red}3})$ and $(1,{\color{red}1},0,2,0,{\color{red}2})$ are glides of $a$.
\end{example}

\begin{definition}
For a weak composition $a$ of length $n$, the {\bf glide polynomial} $\glide^{(\beta)}_a = \glide^{(\beta)}_a(x_1, \dots, x_n)$ is 
\[ \glide^{(\beta)}_a = \sum_b \beta^{\ex(b)} x_1^{b_1} \cdots x_n^{b_n}, \] where the sum is over all weak kompositions $b$ that are glides of $a$. As for $\cgroth^{(\beta)}_w$, we may drop $\beta$ from the notation, unless it is specialized to a particular value.
\end{definition}

\begin{example}\label{ex:glidepoly} We have
  \[\glide_{0102} = \mathbf{x}^{0102} + \mathbf{x}^{1002} + \mathbf{x}^{0120} + \mathbf{x}^{1020} + \mathbf{x}^{1200} + \mathbf{x}^{0111} + \mathbf{x}^{1011} + \mathbf{x}^{1101} + \mathbf{x}^{1110}+\] \[\beta\mathbf{x}^{0112} + \beta\mathbf{x}^{1012} + 2\beta\mathbf{x}^{1102} +  2\beta\mathbf{x}^{1120} +  \beta\mathbf{x}^{1021} +  \beta\mathbf{x}^{0121} + 3\beta\mathbf{x}^{1111} + \beta\mathbf{x}^{1210} + \beta\mathbf{x}^{1201} + \]
  \[ 2\beta^2\mathbf{x}^{1112} + 2\beta^2\mathbf{x}^{1121} + \beta^2\mathbf{x}^{1211},\]
  where $\mathbf{x}^b = x_1^{b_1}\ldots x_n^{b_n}$.
\end{example}

Let ${\rm Poly}_n := \mathbb{Z}[x_1, x_2, \dots, x_n]$ denote the ring of polynomials in $n$ variables.

\begin{theorem}\label{thm:basis}
The set \[ \{\beta^k \glide_a : k \in \mathbb{Z}_{\geq 0} \text{ and $a$ is a weak composition of length } n \}\] is an additive basis of the free $\mathbb{Z}$-module ${\rm Poly}_n[\beta]$. 

Hence $\{ \glide^{(-1)}_a : \text{$a$ is a weak composition of length } n \}$ is a basis of ${\rm Poly}_n$.
\end{theorem}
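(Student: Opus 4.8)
The plan is to deduce both assertions from the single claim that $\{\glide_a : a \text{ a weak composition of length } n\}$ is a basis of $\Poly_n[\beta]$ regarded as a free $\mathbb{Z}[\beta]$-module. Granting this, the first statement follows by combining the $\mathbb{Z}[\beta]$-basis $\{\glide_a\}$ with the $\mathbb{Z}$-basis $\{\beta^k : k \ge 0\}$ of $\mathbb{Z}[\beta]$, and the second (``hence'') follows by base change along the ring map $\mathbb{Z}[\beta] \to \mathbb{Z}$, $\beta \mapsto -1$: the image of a basis of a free module under base change is again a basis, so $\{\glide_a^{(-1)}\}$ is a $\mathbb{Z}$-basis of $\Poly_n = \Poly_n[\beta] \otimes_{\mathbb{Z}[\beta]} \mathbb{Z}$. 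Phrasing the core claim over $\mathbb{Z}[\beta]$ is what lets us avoid treating $\beta = -1$ separately, since specializing a $\mathbb{Z}$-basis need not yield a $\mathbb{Z}$-basis, whereas base-changing a free-module basis always does.

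To prove the $\mathbb{Z}[\beta]$-basis claim, I would show that the monomial expansion of $\glide_a$ is unitriangular with respect to the dominance order. Two inputs are needed. First, $a$ is itself a glide of $a$ of excess $0$ (take the block endpoints $i_j = n_j$, so that each block consists of a single entry of $a$, necessarily black); since a glide whose monomial is $x^a$ must equal $a$ as a composition and hence have excess $0$, the monomial $x^a$ occurs in $\glide_a$ with coefficient exactly $1$. Second, every glide $b$ of $a$ dominates $a$, and $a$ is the unique glide with $x^b = x^a$; moreover summing the defining block equations yields $|b| = |a| + \ex(b)$, so each monomial of $\glide_a$ carries $\beta$-degree equal to its excess. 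Together these give $\glide_a = x^a + \sum_{b > a} p_{a,b}(\beta)\, x^b$ with $p_{a,b} \in \mathbb{Z}[\beta]$, a finite sum because a glide of $a$ has excess at most $n$ and hence bounded degree. Thus the transition matrix is upper-unitriangular over $\mathbb{Z}[\beta]$ with respect to dominance.

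Linear independence over $\mathbb{Z}[\beta]$ is then immediate: in any relation $\sum_a f_a(\beta)\glide_a = 0$, choose a dominance-minimal $a_0$ in the support; the coefficient of $x^{a_0}$ receives contributions only from $\glide_{a_0}$ (any other $\glide_a$ contributing would require $a \le a_0$, hence $a < a_0$), forcing $f_{a_0} = 0$. Spanning requires inverting the unitriangular system, and here lies the main subtlety: unlike the homogeneous fundamental slide polynomials (the excess-$0$ part of $\glide_a$), the glide polynomials have correction monomials $x^b$ of strictly higher degree than $x^a$, so solving for a monomial $x^c$ in terms of glides threatens an infinite regress. The resolution uses two facts about dominance on length-$n$ weak compositions: each lower interval $\{a : a \le c\}$ is finite, and the compositions with $\glide_a = x^a$ include all ``packed'' ones (support an initial segment of $\{1,\dots,n\}$, where $i_j \le n_j$ forces every block to be a singleton). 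Each correction strictly increases dominance, and the total excess accumulated along a chain of corrections is bounded; hence the set of compositions reachable from $c$ by iterated corrections is finite, it forms a finite directed acyclic poset under dominance whose maximal elements are terminal (satisfy $\glide = x^{\,\cdot}$), and back-substitution from the top expresses $x^c$ as a finite $\mathbb{Z}[\beta]$-combination of glides.

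I expect the two genuinely substantive steps to be (i) the dominance claim, which I would prove by comparing partial sums of $b$ and $a$ at and between the block boundaries $i_j$, using that $\sum_{m \le j}\flatten(a)_m$ bounds the $a$-partial-sum up to $i_j$ because $i_j \le n_j$; and (ii) the termination of the spanning recursion, i.e.\ the bounded-accumulated-excess argument guaranteeing that the reachable set above $c$ is finite. The remaining ingredients—that $a$ is its own unique excess-$0$ glide and the base-change bookkeeping—are routine, so the weight of the proof rests on controlling the higher-degree corrections well enough to make the inversion a finite process.
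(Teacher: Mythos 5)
Your proposal is correct, but it takes a genuinely different route from the paper's proof, which works with a single total order on the monomials $\beta^k\mathbf{x}^a$ of $\Poly_n[\beta]$ --- ordering first by the number of zeros in $a$, then by reverse lexicographic order on $a$, then by $k$ --- observes that the $\prec$-leading term of $\glide_a$ is $\beta^0\mathbf{x}^a$, and runs a greedy leading-term subtraction that terminates because that order admits no infinite strictly decreasing chains. You instead use the dominance partial order, prove unitriangularity $\glide_a=\mathbf{x}^a+\sum_{b>a}p_{a,b}(\beta)\,\mathbf{x}^b$, and invert by showing the set of compositions reachable from $c$ by iterated corrections is finite. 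Your framing buys two things the paper leaves implicit: the stronger statement that $\{\glide_a\}$ is a basis of $\Poly_n[\beta]$ as a free $\mathbb{Z}[\beta]$-module, and a genuine argument for the ``hence'' clause (the paper calls it immediate, but specializing a $\mathbb{Z}$-basis at $\beta=-1$ is not automatic; your base-change observation is exactly the right fix). What the paper's total order buys is that termination is a one-line appeal to well-foundedness, whereas you must actually establish your bounded-accumulated-excess claim, which you rightly identify as the crux. The argument you need is a telescoping one: in a glide $b$ of $a$, every red entry is nonzero and every block contributes a nonzero \emph{black} entry (its leading nonzero entry), so $\ex(b)\le N(b)-N(a)$, where $N$ counts nonzero entries; summing along a chain of corrections bounds the total accumulated excess by $n$, hence $|b|\le|c|+n$ for everything reachable from $c$, and the reachable set (sitting inside the finite set of length-$n$ compositions of total at most $|c|+n$) is finite. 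Note that $N$ --- equivalently the zero count --- is precisely the first coordinate of the paper's order, so the two proofs ultimately rest on the same potential function. One small quibble: the finiteness of lower intervals $\{a:a\le c\}$ that you invoke is not what drives the spanning recursion, which climbs \emph{upward} in dominance; the finiteness you actually need is that of the reachable set just described.
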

\begin{proof}
A monomial $m$ in ${\rm Poly}_n[\beta]$ is determined by a pair $(k, a)$, where $k \in \mathbb{Z}_{\geq 0}$ records the degree of $\beta$ in $m$ and $a$ is the weak composition of length $n$ that records the degrees of $x_1, \dots, x_n$ in $m$. Let $\mathcal{M}$ denote the set of such pairs $(k,a)$.

Define a total order on $\mathcal{M}$ by $(k, a) \succ (\ell, b)$ if
\begin{itemize}
\item $a$ contains strictly more $0$'s than $b$, 
\item $a$ and $b$ contain equal numbers of $0$'s and $b$ precedes $a$ in reverse lexicographic order, or
\item $a = b$ and $k > \ell$.
\end{itemize}
Now the $\prec$-leading term of $\glide_a$ is $\beta^0\mathbf{x}^a$. Hence if the $\prec$-leading term of $p \in {\rm Poly}_n[\beta]$ is $c_a  \beta^k \mathbf{x}^a$, then the $\prec$-leading term of \[p_2 := p - c_a \beta^k \glide_a\] is $c_b \beta^\ell \mathbf{x}^b$ for some $(\ell, b) \prec (k,a)$. Then \[p_3 := p_2 - c_b \beta^\ell \glide_b\] has $\prec$-leading term $c_d \beta^m \mathbf{x}^d$ for some $(m, d) \prec (\ell, b)$, etc. Since there are no infinite strictly $\prec$-decreasing sequences in $\mathcal{M}$, this process terminates with an expansion of $p$ as a finite sum of glide polynomials times powers of $\beta$, proving the first sentence of the theorem.

The second sentence of the theorem is immediate from the first.
\end{proof}

\subsection{Expanding $\beta$-Grothendieck polynomials in the glide basis}
By Theorem~\ref{thm:basis}, $\cgroth_w$ may be uniquely written as a sum of glide polynomials in the form
\[
\cgroth_w = \sum_{(k,a)} c_w^{(k,a)} \beta^k \glide_a,
\] where $c_w^{(k,a)} \in \mathbb{Z}$. This subsection is devoted to showing that these coefficients $c_w^{(k,a)}$ are in fact \emph{nonnegative} integers; we show this by giving an explicit positive combinatorial formula for $c_w^{(k,a)}$.

The following two notions extend definitions of S.~Assaf--D.~Searles \cite{Assaf.Searles} to non-reduced pipe dreams.
\begin{definition}
  For $P\in\PD(w)$, the {\bf destandardization of $P$}, denoted by $\destand(P)$, is the pipe dream constructed from $P$ as follows. For each row, say $i-1$, with no $\cross$ in the first column, if every $\cross$ in row $i-1$ lies strictly east of every $\cross$ in row $i$, then shift every $\cross$ in row $i-1$ southwest one position (if the westmost $\cross$ of row $i-1$ is immediately northeast of a $\cross$, then these two crosses merge during the shift). Repeat until no such row exists.
  \label{def:w-destand}  
\end{definition}

\begin{example}
\[
P= \pipes{ & 1 & 2 & 3 & 4 & 5 \\
1 & \turn & \turn & \turn & \cross & \tail \\
2 & \cross & \cross & \cross & \tail \\
3 & \turn & \cross & \tail \\
4 & \turn & \tail \\
5 & \tail } 
\qquad \qquad \destand(P) = \pipes{ & 1 & 2 & 3 & 4 & 5 \\
1 & \turn & \turn & \turn & \turn & \tail \\
2 & \cross & \cross & \cross & \tail \\
3 & \turn & \turn & \tail \\
4 & \cross & \tail \\
5 & \tail }.
\]
\end{example}

\begin{definition}
 A pipe dream is {\bf quasi-Yamanouchi} if the following is true for the westmost $\cross$ in every row: Either 
 \begin{itemize}
 \item[(1)]  it is in the westmost column, or
 \item[(2)] it is weakly west of some $\cross$ in the row below it.
\end{itemize}  Let $\QPD(w)$ denote the set of quasi-Yamanouchi pipe dreams for the permutation $w$ and let $\QPD_e(w)$ be the subset of those with excess $e$.
\end{definition}

\begin{example}\label{ex:QY_PDs}
The pipe dream $\reduct(P)$ of Example~\ref{ex:reduced_PDs} is not quasi-Yamanouchi, since the westmost $\cross$ in the top row is not in the first column and there is no $\cross$ in the row below. In the pipe dream $P$ of Example~\ref{ex:reduced_PDs} the westmost $\cross$ in the top row is weakly west of a $\cross$ in the second row. However the $\cross$ in the second row is neither in the first column nor weakly west of a $\cross$ in the third row. Hence $P$ is not quasi-Yamanouchi either. 

A quasi-Yamanouchi pipe dream for $1432$ is
\[
Q = \pipes{ & 1 & 2 & 3 & 4 \\
1 & \turn & \cross & \cross & \tail \\
2 & \cross & \cross & \tail \\
3 & \cross & \tail \\
4 & \tail }.
\] (The reduction of $Q$ is formed by removing the $\cross$'s in the top row.)
\end{example}

The {\bf Lehmer code} $L(w)$ of a permutation $w$
is the weak composition whose $i$th term is the number of indices $j$ for which $i < j$ and $w(i) > w(j)$. For example, $L(146235) = (0, 2, 3, 0, 0, 0)$.

\begin{lemma}
  The destandardization map is well-defined and satisfies the following:
  \begin{enumerate}
  \item for $P \in \PD(w)$, $\destand(P) \in \QPD(w)$;
  \item for $P \in \PD(w)$, $\destand(P)=P$ if and only if $P \in \QPD(w)$;
  \item $\destand:\PD(w) \rightarrow \QPD(w)$ is surjective;
  \item $\destand:\PD(w) \rightarrow \QPD(w)$ is injective if and only if $w_i<w_{i+1}$ for all $i\ge w^{-1}(1)$.
  \end{enumerate}
  \label{lem:w-destand}
\end{lemma}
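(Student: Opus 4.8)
The plan is to route all four claims through one structural fact: a quasi-Yamanouchi pipe dream is exactly a pipe dream on which no destandardization shift can be performed.

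First I would establish the invariances that make $\destand$ meaningful. A single shift slides each crossing of a row one step southwest, i.e.\ the cell $(r,c)$ moves to $(r{+}1,c{-}1)$, so crossings travel along their anti-diagonals (lines of constant $r+c$); the hypothesis that every crossing of row $i-1$ lies strictly east of every crossing of row $i$ is what guarantees that this slide does not disturb the connectivity of the pipes, so the permutation $w$ is preserved. A merge occurs exactly when two crossings come to lie adjacently on a common anti-diagonal, which is a removable double crossing, and so a merge preserves $w$ while lowering $\ex$ by one. Next I would prove termination by a strictly decreasing monovariant, namely the sum of the column-coordinates of all crossings: each shift moves at least one crossing one column west and each merge deletes a crossing, so this nonnegative integer strictly decreases. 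Because the process terminates, well-definedness reduces by Newman's lemma to local confluence; the only nontrivial case is two shiftable rows that are vertically adjacent (non-adjacent shifts plainly commute), and I would check that the two orders of shifting — together with the cascading shifts and merges they trigger — reach a common pipe dream. I expect this confluence check, with the merge bookkeeping, to be the most delicate part of the well-definedness argument.

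The heart of (1)--(3) is that non-shiftability of row $i-1$ matches the quasi-Yamanouchi condition exactly: the negation of ``every crossing in row $i-1$ lies strictly east of every crossing in row $i$'' is precisely ``the westmost crossing of row $i-1$ is weakly west of some crossing in row $i$'', while ``a crossing in the first column'' is the westmost-column condition. Hence $P$ admits no shift if and only if every row satisfies (1) or (2), that is, $P\in\QPD(w)$. This gives (2) at once, since a non-fixed $P$ strictly decreases the monovariant and so $\destand(P)\neq P$; it gives (1) because $\destand(P)$ is a shift-terminal pipe dream of permutation $w$; and it gives (3) trivially, as each $Q\in\QPD(w)\subseteq\PD(w)$ is its own image.

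For (4) I would first reduce injectivity to the identity $\PD(w)=\QPD(w)$: every $Q\in\QPD(w)$ is a fixed point, so if some $P\in\PD(w)$ fails to be quasi-Yamanouchi then $\destand(P)\neq P$ lies in $\QPD(w)$ and its fiber has size at least two, while conversely $\PD(w)=\QPD(w)$ forces $\destand=\mathrm{id}$. It then suffices to show that $\PD(w)$ contains a shiftable pipe dream if and only if $w$ has a descent at some position $i\geq w^{-1}(1)$; since $w(w^{-1}(1))=1$ is the global minimum, this is exactly the negation of ``$w_i<w_{i+1}$ for all $i\geq w^{-1}(1)$'' (equivalently $L(w)_i=0$ for all $i\geq w^{-1}(1)$). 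For the forward direction I would begin with the dominant pipe dream and use the descent to perform a ladder move lifting a crossing out of the first column, producing a row whose westmost crossing sits strictly east of column $1$ with no crossing weakly east of it below — a shiftable row. For the converse I would argue that when $w$ is increasing after the position of its value $1$, the pipe carrying value $1$ runs straight down the first column and anchors the lower-left region, so that no row of any pipe dream for $w$ can be shiftable. This last direction rests on a careful reading of pipe trajectories, and it is where I expect the principal difficulty of (4) to lie.
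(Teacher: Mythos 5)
Your treatment of well-definedness and of parts (1)--(3) is sound and in fact more careful than the paper's: the paper simply asserts that each shift preserves the permutation and that termination coincides with the quasi-Yamanouchi condition, whereas you supply a termination monovariant, correctly identify the merge as the removal of a double crossing of a single pair of pipes (one can check that the two tiles $(i-1,c)$ and $(i,c-1)$ involved in a merge are forced to be crossings of the same two pipes, so excess drops by one and the permutation is preserved), and you are right to flag confluence, which the paper passes over silently. Your equivalence ``non-shiftable row $\Leftrightarrow$ quasi-Yamanouchi condition'' and the resulting derivation of (1)--(3), as well as the reduction of (4) to the statement $\PD(w)=\QPD(w)$, all match the paper's logic.

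The genuine gap is in part (4), where you replace the paper's two citations by sketches that are not carried out. For the direction ``descent after $w^{-1}(1)$ implies non-injective,'' the paper cites \cite[Lemma~3.12(4)]{Assaf.Searles}; your ladder-move construction is plausible but you give no argument that the required ladder move exists or that its output has a shiftable row. More seriously, for the converse you assert that the pipe carrying the value $1$ ``anchors the lower-left region,'' but this does not engage with where the hypothesis is actually used. The forced $\cross$'s in column $1$ of rows $1,\dots,m-1$ (with $m=w^{-1}(1)$) hold for \emph{every} permutation, so they say nothing about rows $\geq m$; the content of the converse is precisely that when $w$ is increasing after position $m$, no pipe dream for $w$ has a quasi-Yamanouchi--violating crossing in a row $\geq m$. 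Moreover, your pipe-trajectory argument has a hidden obstacle for non-reduced pipe dreams: the permutation of $P\in\PD(w)$ is defined via $\reduct(P)$ (a Demazure/0-Hecke product), so the physical pipes of $P$ trace out a possibly different permutation than $w$, and ``the pipe carrying value $1$'' cannot be located by reading $w$ alone. The paper's route (establish the column-$1$ structure for all \emph{reduced} pipe dreams via the Bergeron--Billey connectivity, then extend to $\PD(w)$) sidesteps the first problem but not the second; either way, this non-reduced extension is the step your proposal would need to supply and currently does not.
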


\begin{proof}  
Observe that if $P \in \PD(w)$, applying destandardization at row $i$ gives another pipe dream for $w$. The destandardization procedure terminates only when the quasi-Yamanouchi condition is satisfied, proving (1) and (2). Property (3) is immediate from (2). 

 For property (4), note that for any $w$, there is a reduced pipe dream $P_{L(w)}$ given by placing $L(w)_i$ $\cross$'s in row $i$, column $1$. Suppose $w$ has no descent after the $m$th position, where $m := w^{-1}(1)$. Then $P_{L(w)}$ has $\cross$'s in row $i$, column $1$ for all $i<m$, and no $\cross$'s in row $i$ for $i\ge m$. It is then immediate from the local moves connecting elements of $\PD_0(w)$ (\cite{Bergeron.Billey}) that every reduced pipe dream for $w$ has $\cross$'s in row $i$, column $1$ for all $i<m$, and no $\cross$'s in row $i$ for $i\ge m$. Thus, the same is true for all $P \in \PD(w)$ and hence $\destand(P) = P$ for all $P\in\PD(w)$.   
Conversely, if $w$ has a descent after the $m$th position, then by \cite[Lemma~3.12(4)]{Assaf.Searles}, the map $\destand : \PD_0(w) \to \QPD_0(w)$ is not injective, so certainly the extension $\destand : \PD(w) \to \QPD(w)$ is not injective.
\end{proof}

\begin{theorem}\label{thm:glide_expansion}
For $w$ any permutation, we have
\begin{equation*}
\cgroth_w =  \sum_{Q \in \QPD(w)} \beta^{\ex(Q)} \glide_{\wt(Q)}.
\end{equation*}
\end{theorem}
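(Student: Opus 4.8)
The plan is to organize the defining pipe-dream sum for $\cgroth_w$ according to the fibers of the destandardization map. By Lemma~\ref{lem:w-destand}, $\destand \colon \PD(w) \to \QPD(w)$ is a well-defined surjection, so its fibers partition $\PD(w)$, and we may write
\[
\cgroth_w = \sum_{P \in \PD(w)} \beta^{\ex(P)} x^{\wt(P)} = \sum_{Q \in \QPD(w)} \Bigl( \sum_{P \in \destand^{-1}(Q)} \beta^{\ex(P)} x^{\wt(P)} \Bigr).
\]
Comparing with the asserted formula, it suffices to prove the fiberwise identity
\[
\sum_{P \in \destand^{-1}(Q)} \beta^{\ex(P)} x^{\wt(P)} = \beta^{\ex(Q)} \glide_{\wt(Q)}
\]
for each fixed $Q \in \QPD(w)$.

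First I would dispense with the exponent of $\beta$. Since every reduced pipe dream for $w$ has exactly $\ell(w)$ crossings and $\reduct(P)\in\PD_0(w)$, we have $\ex(P) = |\wt(P)| - \ell(w)$ for all $P \in \PD(w)$, where $|\cdot|$ denotes the sum of the entries of a weak composition; in particular $\ex(P) - \ex(Q) = |\wt(P)| - |\wt(Q)|$. On the other side, writing $a := \wt(Q)$ and summing the block equations $b_{i_j+1} + \dots + b_{i_{j+1}} = \flatten(a)_{j+1} + \ex(b_{i_j+1}, \dots, b_{i_{j+1}})$ over all blocks shows that every glide $b$ of $a$ satisfies $\ex(b) = |\underline{b}| - |a|$, where $\underline{b}$ denotes the underlying weak composition of the weak komposition $b$. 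Thus the power of $\beta$ attached to a term is determined by its underlying weak composition alone, and the fiberwise identity collapses to the purely enumerative claim that, for every weak composition $c$, the number of $P \in \destand^{-1}(Q)$ with $\wt(P) = c$ equals the number of glides $b$ of $\wt(Q)$ with $\underline{b} = c$.

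I would prove this enumerative claim by exhibiting an explicit weight-preserving bijection between $\destand^{-1}(Q)$ and the set of glides of $\wt(Q)$, given by \emph{reverse destandardization}: one recovers $P$ from $Q$ by raising crossings from lower rows into higher rows, and the record of how the crossings of $Q$ redistribute is precisely a glide. Here the block boundaries $i_1 < \dots < i_\ell$ mark the rows receiving the successive groups of crossings of $Q$; the inequalities $i_{j+1} \le n_{j+1}$ encode that a group of crossings cannot be spread below the row it occupied in $Q$; the requirement that each $b_{i_j+1}$ be black singles out the topmost crossing raised in each group as a genuine slide; and the red entries record the crossings that must be reintroduced, each accounting for one unit of excess. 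The main obstacle is controlling the \emph{merges}: when, during destandardization, the westmost crossing of a row slides atop an existing crossing, the two coalesce and the crossing count drops, so reverse destandardization must split such a crossing back apart. In the reduced setting of \cite{Assaf.Searles} no merges occur and the corresponding statement is the expansion of Schubert polynomials into fundamental slide polynomials; the new work is entirely in tracking these merges. I would therefore carry out a careful local analysis of a single destandardization step, showing (i) that the step is reversible once the block boundaries and colors are recorded, (ii) that the glide equations force exactly the colorings that count the reintroduced crossings, and (iii) that each reconstructed pipe dream lies in $\PD(w)$ and destandardizes back to $Q$. Splicing these local statements together, and invoking Lemma~\ref{lem:w-destand} to guarantee that the construction is defined on the whole fiber and hits every glide, would yield the bijection and hence the theorem.
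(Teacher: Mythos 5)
Your proposal is correct and follows essentially the same route as the paper: reduce via Lemma~\ref{lem:w-destand} to the fiberwise identity $\sum_{P \in \destand^{-1}(Q)} \beta^{\ex(P)-\ex(Q)} x^{\wt(P)} = \glide_{\wt(Q)}$, and establish it by the colored-weight/reverse-destandardization bijection between the fiber over $Q$ and the glides of $\wt(Q)$. Your preliminary observation that $\ex(P)-\ex(Q)=|\wt(P)|-|\wt(Q)|$ and $\ex(b)=|\underline{b}|-|\wt(Q)|$, which lets the $\beta$-bookkeeping follow automatically from weight preservation, is a small clean addition not made explicit in the paper, but the core construction is the same.
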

\begin{proof}
By Lemma~\ref{lem:w-destand}, it suffices to show that, for $Q \in \QPD(w)$, we have 
\begin{equation*}
\glide_{\wt(Q)} = \sum_{P \in \destand^{-1}(Q)} \beta^{\ex(P) - \ex(Q)}  x^{\wt(P)}.
\end{equation*}
By definition, 
\[ \glide_{\wt(Q)} = \sum_{b \text{ is a glide of } \wt(Q)} \beta^{\ex(b)} x_1^{b_1} \cdots x_n^{b_n}.\] 

For a pipe dream $P$, the {\bf colored weight} of $P$ is the weak komposition $\kwt(P)$ obtained by coloring the $i$th entry of $\wt(P)$ red if a $\cross$ can merge into the rightmost $\cross$ of the $i$th row of $P$ during application of $\destand$.
It is not hard to see that if $\destand(P)=Q$, then $\kwt(P)$ is a glide of $\wt(Q)$.

Conversely, we claim that given $Q \in \QPD(w)$, for every weak komposition $b$ that is a glide of $\wt(Q)$, there is a unique $P \in \PD(w)$ with $\kwt(P) = b$ such that $\destand(P) = Q$.  To construct this $P$ from $b$ and $Q$, for $j = 1,\ldots,n$, if $\wt(Q)_{j} = b_{i_{j-1} + 1} + \cdots + b_{i_{j}} - \ex(b_{i_{j-1} + 1}, \dots, b_{i_{j}})$, then, from east to west, shift the first $b_{i_{j-1} + 1} + \cdots + b_{i_j - 1}$ $\cross$'s northeast from row $j$ to row $j-1$ while leaving a copy of the leftmost of these moved $\cross$'s in place if $b_{i_j}$ is red, the first $b_{i_{j-1} + 1} + \cdots + b_{i_j - 2}$ $\cross$'s northeast from row $j-1$ to row $j - 2$ while leaving a copy of the leftmost of these moved $\cross$'s in place if $b_{i_j - 1}$ is red, and so on. This proves existence, and uniqueness follows from the lack of choice at each step.
\end{proof}

\subsection{Fundamental slide polynomials}

The fundamental slide basis of ${\rm Poly}_n$ was introduced by S.~Assaf-D.~Searles \cite{Assaf.Searles}, who applied it to the study of Schubert polynomials.
We say that a composition $b$ {\bf refines} a composition $a$ if $a$ can be obtained by summing consecutive entries of $b$, e.g.,~$(1,1,2,1)$ refines $(2,3)$ but $(1,2,1,1)$ does not.
 For a weak composition $a$ of length $n$, define the {\bf fundamental slide polynomial} $\Fund_{a} = \Fund_{a}(x_1,\ldots,x_n)$ by
  \begin{equation*}
    \Fund_{a} = \sum_{\substack{b \geq a \\ \flatten(b) \ \mathrm{refines} \ \flatten(a)}}  x_1^{b_1}\ldots x_n^{b_n}.
  \end{equation*}
  
\begin{example}\label{ex:slidepoly}
\[
  \Fund_{0102} = \mathbf{x}^{0102} + \mathbf{x}^{1002} + \mathbf{x}^{0120} + \mathbf{x}^{1020} + \mathbf{x}^{1200} + \mathbf{x}^{0111} + \mathbf{x}^{1011} + \mathbf{x}^{1101} + \mathbf{x}^{1110}.
\]  Notice that $\Fund_{0102} = \glide^{(0)}_{0102}$ (see Example~\ref{ex:glidepoly}).
\end{example}

\begin{theorem}
The fundamental slide polynomials are a specialization of the glide polynomials. More precisely, \[\Fund_a = \glide_a^{(0)}.\]
\end{theorem}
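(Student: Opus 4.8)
The plan is to specialize $\beta = 0$ directly in the definition of $\glide_a$ and match the surviving terms against the definition of $\Fund_a$. Setting $\beta = 0$ annihilates every term $\beta^{\ex(b)}x^b$ with $\ex(b) > 0$, so $\glide_a^{(0)} = \sum_b x^b$, where $b$ ranges over the glides of $a$ with no red entries, i.e.\ over weak compositions $b$ (all entries black) that are glides of $a$. Since the sum defining $\glide_a$ is over weak kompositions $b$, and the all-black coloring of a given weak composition is unique, each such $b$ contributes its monomial with coefficient exactly $1$, irrespective of how many index sequences witness that it is a glide. Thus it suffices to prove that a weak composition $b$ is an all-black glide of $a$ if and only if $b \geq a$ and $\flatten(b)$ refines $\flatten(a)$, as the latter is precisely the indexing condition of $\Fund_a$.

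For an all-black $b$ the glide conditions collapse: every excess summand $\ex(b_{i_j+1},\dots,b_{i_{j+1}})$ vanishes, and the requirement that each $b_{i_j+1}$ be black is automatic. Writing $n_1 < \cdots < n_k$ for the positions of the nonzero parts of $a$ and taking $i_0 = 0$, being an all-black glide of $a$ becomes the statement that there exist $0 = i_0 < i_1 < \cdots < i_k$ with $i_j \leq n_j$ and consecutive block sums $b_{i_{j-1}+1} + \cdots + b_{i_j} = \flatten(a)_j$ for $1 \leq j \leq k$, with $b$ supported on $[1,i_k]$.

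I would then prove the equivalence of this with the pair of conditions ($b \geq a$, $\flatten(b)$ refines $\flatten(a)$). For the forward direction, the blocks partition the nonzero entries of $b$ into $k$ consecutive runs of sums $\flatten(a)_1,\dots,\flatten(a)_k$, which is exactly the assertion that $\flatten(b)$ refines $\flatten(a)$; for dominance, the block sums give $\sum_{i \leq i_j} b_i = \sum_{t \leq j}\flatten(a)_t = \sum_{i \leq n_j} a_i$ at each checkpoint, and I would interpolate to an arbitrary index $m$ using $i_j \leq n_j$, the monotonicity of the partial sums of $b$, and the fact that $a$ is supported on the $n_j$. Conversely, given $b \geq a$ with $\flatten(b)$ refining $\flatten(a)$, the refinement groups the nonzero parts of $b$ into $k$ consecutive runs summing to the $\flatten(a)_j$; I would set $i_j$ to be the position of the last nonzero entry of the $j$th run. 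The block sums are then correct by construction and $b$ is supported on $[1,i_k]$ automatically, while $i_j \leq n_j$ follows because $\sum_{i \leq i_j} b_i = \sum_{i \leq n_j} a_i \leq \sum_{i \leq n_j} b_i$ forces $n_j \geq i_j$ by monotonicity of partial sums together with $b_{i_j} \neq 0$.

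I expect the dominance bookkeeping to be the only genuine obstacle. The constraints $i_j \leq n_j$ pin down the partial sums of $b$ only at the checkpoints $i_j$, so the substance of the argument is the interpolation showing these checkpoint inequalities propagate to every index $m$ (and, in the reverse direction, that full dominance is already captured by the checkpoint positions $n_j$). Everything else—the vanishing of the excess and color conditions at $\beta = 0$, the coefficient-one count, and the reformulation of the block decomposition as a refinement—is routine.
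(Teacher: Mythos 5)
Your proposal is correct and follows essentially the same route as the paper: specialize $\beta=0$, observe that only all-black glides survive (each with coefficient $1$), and identify these with the weak compositions $b$ satisfying $b\geq a$ and $\flatten(b)$ refining $\flatten(a)$. The paper simply asserts this identification in one sentence (excess-$0$ glides are the left-shifts/splits of $a$), whereas you supply the checkpoint/interpolation bookkeeping that justifies it; no gap.
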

\begin{proof}
If $b$ is a glide of $a$ with excess $0$, then all entries of $b$ are black, so $b$ is a weak composition obtained by shifting or splitting the entries of $a$ to the left while preserving their relative order. Conversely, every such weak composition may be so obtained.
\end{proof}

\begin{remark}
Setting $\beta = 0$ in Theorem~\ref{thm:glide_expansion} recovers \cite[Theorem~3.13]{Assaf.Searles} for the fundamental slide expansion of Schubert polynomials.
\end{remark}

%

\section{Symmetric Grothendieck polynomials and quasisymmetric glide polynomials}\label{sec:quasi}
\subsection{Glide expansions of symmetric $\beta$-Grothendieck polynomials}
When $w$ is a Grassmannian permutation, i.e., $w$ has at most one descent, $\cgroth^{(\beta)}_w$ is a symmetric polynomial (with coefficients in $\mathbb{Z}[\beta]$). Let $n$ be the index of the rightmost nonzero entry of $L(w)$, or equivalently the position of the unique descent of $w$. Then the symmetric $\beta$-Grothendieck polynomial $\cgroth_w$ may be written as $\cgroth_\lambda(x_1,\ldots x_n)$ where $\lambda$ is the partition given by reading the nonzero entries of $L(w)$ in reverse. We identify the partition $\lambda$ with its corresponding Young diagram (in English orientation).

A {\bf set-valued tableau} of shape $\lambda$ is obtained by filling each box of the Young diagram $\lambda$ with a nonempty set of positive integers, subject to the condition that if a box is filled with a set $A$, then the smallest number in the box immediately to the right (respectively, immediately below) is at least as large as (respectively, strictly larger than) $\max (A)$. The {\bf weight} $\wt(T)$ of a set-valued tableau $T$ is the weak composition whose $i$th entry is the number of occurrences of $i$ in $T$. Let $|T|$ denote the sum of the entries of $\wt(T)$.

In \cite[Theorem 3.1]{Buch}, A. Buch expressed the monomial expansion of $\cgroth^{(-1)}_\lambda$ as a weighted sum of set-valued tableaux; this formula easily extends to the case of general $\beta$. Let $\SV_n(\lambda)$ denote the collection of all set-valued tableaux of shape $\lambda$ using labels from $\{1,\ldots , n\}$.

\begin{theorem}[{\cite{Buch}}]\label{thm:buch.symmetric}
\begin{equation*} 
\cgroth_\lambda(x_1,\ldots x_n) =  \sum_{T \in \SV_n(\lambda)} \beta^{|T|-|\lambda|}x^{\wt(T)},
\end{equation*}
where $|\lambda|$ denotes the number of boxes in $\lambda$.
\end{theorem}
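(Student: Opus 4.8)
The plan is to take Buch's result at $\beta=-1$ (i.e.\ \cite[Theorem~3.1]{Buch}, which gives $\groth_\lambda = \cgroth^{(-1)}_\lambda = \sum_{T \in \SV_n(\lambda)} (-1)^{|T|-|\lambda|}x^{\wt(T)}$) as given, and to upgrade it to arbitrary $\beta$ by a homogeneity argument. The crucial observation is that in each of the two generating functions, the exponent of $\beta$ attached to a monomial $x^a$ is not an independent datum but is forced by the total $x$-degree $|a|$. Once this is established, the general-$\beta$ identity follows formally from the special case $\beta=-1$.

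First I would record the homogeneity on each side separately. On the pipe-dream side, for $P\in\PD(w)$ the weight $\wt(P)$ counts all the $\cross$'s of $P$, so $|\wt(P)|$ equals the total number of crossings; this exceeds the number $\ell(w)$ of crossings in $\reduct(P)$ by exactly $\ex(P)$, whence $\ex(P) = |\wt(P)| - \ell(w)$. Since $\lambda$ is the reverse of the nonzero entries of $L(w)$, we have $\ell(w) = |L(w)| = |\lambda|$, so the $\beta$-exponent of the term $x^{\wt(P)}$ is $|\wt(P)| - |\lambda|$. On the tableau side, $|T| = |\wt(T)|$ by definition of the weight, so the $\beta$-exponent $|T|-|\lambda|$ likewise equals $|\wt(T)| - |\lambda|$. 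Thus in both sums every monomial $x^a$ carries precisely $\beta^{|a|-|\lambda|}$; equivalently, each side is homogeneous of degree $|\lambda|$ under the grading $\deg(x_i)=1$, $\deg(\beta)=-1$.

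It follows that each side can be written uniquely in the form $\sum_a c_a\,\beta^{|a|-|\lambda|}x^a$, with the \emph{same} exponent function $a\mapsto |a|-|\lambda|$ on both sides; only the coefficients $c_a$ (respectively $c'_a$) differ a priori. Specializing $\beta=-1$ multiplies the coefficient of $x^a$ by the nonzero scalar $(-1)^{|a|-|\lambda|}$, so by \cite[Theorem~3.1]{Buch} we obtain $(-1)^{|a|-|\lambda|}c_a = (-1)^{|a|-|\lambda|}c'_a$ for every $a$, hence $c_a = c'_a$. Therefore the two expressions agree as polynomials in $\beta$ and $x_1,\ldots,x_n$, which is the desired formula.

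The only genuine content lies in the homogeneity bookkeeping of the second paragraph, centered on the identity $\ex(P) = |\wt(P)| - \ell(w)$ together with $\ell(w) = |\lambda|$; everything after that is the formal principle that a polynomial homogeneous for this grading is recovered from any single nonzero specialization of $\beta$. I would expect no real obstacle here, which is consistent with the paper's assertion that Buch's formula ``easily extends'' to general $\beta$; the one point to state with care is that the power of $\beta$ is genuinely a function of the $x$-degree alone on each side, so that the sign factors introduced at $\beta=-1$ are invertible and identical across the two expansions.
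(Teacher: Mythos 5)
Your proposal is correct and is essentially the argument the paper has in mind: the paper simply cites Buch's theorem at $\beta=-1$ and remarks that it ``easily extends,'' and your homogeneity bookkeeping --- $\ex(P)=|\wt(P)|-\ell(w)$ with $\ell(w)=|L(w)|=|\lambda|$ on the pipe-dream side, $|T|=|\wt(T)|$ on the tableau side, so the $\beta$-exponent of $x^a$ is $|a|-|\lambda|$ in both sums --- is exactly the standard justification of that remark. The one specialization at the invertible scalar $(-1)^{|a|-|\lambda|}$ then forces the coefficients to agree, so no further content is needed.
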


In \cite[Definition~2.4]{Assaf.Searles}, S. Assaf--D. Searles gave a condition for a semistandard Young tableau to be \emph{quasi-Yamanouchi}, and used this to express the fundamental slide expansion of a Schur polynomial $s_\lambda(x_1,\ldots , x_n)$ in terms of quasi-Yamanouchi tableaux of shape $\lambda$. 
We extend this concept to set-valued tableaux in order to give a tableau formula for the glide expansion of a symmetric $\beta$-Grothendieck polynomial. 

\begin{definition}\label{def:QSV}
A set-valued tableau $T$ is {\bf quasi-Yamanouchi} if for all $i>1$, some instance of $i$ in $T$ is weakly left of some $i-1$ that is not in the same box.
\end{definition}

In the case there is only one entry per box, i.e., $T$ is a semistandard Young tableau, Definition~\ref{def:QSV} reduces to the definition of quasi-Yamanouchi tableaux from \cite[Definition~2.4]{Assaf.Searles}. 
For a weak composition $a$ of length $n$, let $\rev(a)$ be the weak composition of length $n$ obtained by reversing the entries of $a$.

\begin{theorem}\label{thm:symmetric}
For $\lambda$ any partition, we have
\begin{equation*} 
\cgroth_\lambda(x_1,\ldots x_n) =  \sum_{T \in \QSV_n(\lambda)} \beta^{|T|-|\lambda|}\glide_{\rev(\wt(T))}.
\end{equation*}
\end{theorem}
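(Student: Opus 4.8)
The plan is to parallel the proof of Theorem~\ref{thm:glide_expansion}, replacing pipe dreams by set-valued tableaux and the pipe-dream destandardization of Definition~\ref{def:w-destand} by a destandardization on set-valued tableaux. Combined with Buch's monomial formula (Theorem~\ref{thm:buch.symmetric}), this reduces the theorem to a single fiber identity indexed by a quasi-Yamanouchi tableau.

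First I would define a destandardization map $\destand : \SV_n(\lambda) \to \QSV_n(\lambda)$ directly extending the semistandard destandardization of \cite{Assaf.Searles}: while $T$ is not quasi-Yamanouchi, select a value $i>1$ violating the condition of Definition~\ref{def:QSV} (every instance of $i$ lies strictly right of every $i-1$ outside its box) and decrement all copies of $i$ to $i-1$; when a box thereby acquires a duplicate $i-1$, the two entries merge into one, decreasing $|T|$ by one. I would then prove the analogue of Lemma~\ref{lem:w-destand}: the procedure is well-defined (confluent and order-independent), terminates at a quasi-Yamanouchi tableau, fixes exactly the tableaux in $\QSV_n(\lambda)$, and is surjective. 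As in Lemma~\ref{lem:w-destand}, only well-definedness, the landing in $\QSV_n(\lambda)$, and surjectivity are needed; the map is generally far from injective, and its fibers are precisely what the glide polynomials enumerate.

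The crux is the fiber identity
\begin{equation*}
\glide_{\rev(\wt(Q))} = \sum_{T \in \destand^{-1}(Q)} \beta^{|T|-|Q|} x^{\wt(T)}
\end{equation*}
for each $Q \in \QSV_n(\lambda)$; summing this over $Q$, weighted by $\beta^{|Q|-|\lambda|}$, and invoking Theorem~\ref{thm:buch.symmetric} yields the theorem. To prove it I would introduce a colored weight $\kwt(T)$ whose underlying composition is $\wt(T)$ and whose $i$th entry is colored red exactly when decrementing the copies of $i$ forces a merge during $\destand$, so that $\ex(\kwt(T)) = |T|-|Q|$ records the total number of merges. The reversal is forced by orientation: decrementing moves $\wt$-mass toward smaller indices, whereas a glide of $a$ spreads the mass of $a$ leftward, so it is $\wt(T)$ itself (not its reverse) that is a glide of $\rev(\wt(Q))$, as one checks already for $\lambda=(1)$, where $\glide_{(0,\dots,0,1)}$ reproduces the single-box sum $\sum_{\varnothing \ne S}\beta^{|S|-1}x^S$ of Theorem~\ref{thm:buch.symmetric}. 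I would show that $\kwt(T)$ is always a glide of $\rev(\wt(Q))$, and conversely that from $Q$ and any glide $b$ of $\rev(\wt(Q))$ one reconstructs a unique $T \in \destand^{-1}(Q)$ with $\kwt(T)=b$, by reversing the destandardization value by value: incrementing the prescribed numbers of entries and splitting off a fresh copy wherever $b$ has a red entry.

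The main obstacle is precisely this reconstruction. Unlike the pipe-dream setting, set-valued tableaux must remain row-weakly and column-strictly increasing after labels are raised and copies are split off, so I must verify that the quasi-Yamanouchi hypothesis on $Q$ guarantees enough room at each step for the reconstruction to produce a valid set-valued tableau, and that the red entries of $b$ correspond bijectively to the admissible box-duplications. Establishing that this process is both well-defined and forced at every step --- the analogue of the ``lack of choice'' argument closing the proof of Theorem~\ref{thm:glide_expansion} --- is the technical heart of the argument; the three defining conditions of a glide are exactly the constraints that reconcile the reconstruction with the shape of $Q$ and the bound $i_{j+1}\le n_{j+1}$.
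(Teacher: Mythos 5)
Your architecture --- a decrementing destandardization on set-valued tableaux, an analogue of Lemma~\ref{lem:w-destand}, a fiber identity, and then Buch's formula --- is genuinely different from the paper's proof, which never touches Theorem~\ref{thm:buch.symmetric}: the paper instead uses the Knutson--Miller--Yong bijection $\phi\colon \SV_n(\lambda)\to\PD(w)$, shows $\phi^{-1}(\QPD(w))=\QSV_n(\lambda)$ and $\wt(\phi(T))=\rev(\wt(T))$, and quotes Theorem~\ref{thm:glide_expansion}. Your route could be made to work, but your central fiber identity is false as stated, because your orientation claim (``it is $\wt(T)$ itself, not its reverse, that is a glide of $\rev(\wt(Q))$'') is backwards. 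Concretely, take $\lambda=(2,1)$, $n=3$, and let $T$ be the tableau with both first-row boxes filled by $\{2\}$ and the second-row box by $\{3\}$, so $\wt(T)=(0,2,1)$ and $|T|=|\lambda|$. Your destandardization decrements the $2$'s (no $1$ is present), then the resulting $3$, landing at the quasi-Yamanouchi $Q$ with first row $\{1\},\{1\}$ and second row $\{2\}$, so $\rev(\wt(Q))=(0,1,2)$. Your identity then requires $(0,2,1)$ to be a glide of $(0,1,2)$ with excess $0$, but it is not a glide under any coloring: the first block is confined to positions $1,2$ and must sum to $1$ plus its excess with black leading entry, which $b_1+b_2=2$ cannot satisfy without a red entry, and a red entry is incompatible with excess $0$. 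Indeed, in the expansion $\cgroth_{(2,1)}(x_1,x_2,x_3)=\glide_{012}+\glide_{021}+\beta(\cdots)$ the monomial $x_2^2x_3$ is supplied by $\glide_{021}$, not $\glide_{012}$. What is true is that $\rev(\wt(T))=(1,2,0)$ is a glide of $(0,1,2)$.

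The source of the error: decrementing moves weight toward smaller indices, so the preimages $T$ of $Q$ have weight spread toward \emph{larger} indices relative to $\wt(Q)$, and it is only after reversal that this becomes the leftward spreading that glides perform. Your $\lambda=(1)$ sanity check cannot detect this because the set of glides of $(0,\dots,0,1)$ (colored indicator vectors of nonempty subsets) is invariant under reversal. The repair is to prove the fiber identity in the form
\begin{equation*}
\glide_{\rev(\wt(Q))} \;=\; \sum_{T \in \destand^{-1}(Q)} \beta^{|T|-|Q|}\, x^{\rev(\wt(T))},
\end{equation*}
but then summing over $Q$ yields $\sum_{T}\beta^{|T|-|\lambda|}x^{\rev(\wt(T))}$, which agrees with Buch's formula only after invoking the symmetry of $\cgroth_\lambda(x_1,\dots,x_n)$ (or exhibiting a weight-reversing involution on $\SV_n(\lambda)$) --- an additional step your outline does not identify. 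With that correction and the substantial verification you already flag (that reconstruction from a glide of $\rev(\wt(Q))$ produces a valid set-valued tableau, uniquely), the argument should go through, though at that point it is arguably no shorter than the paper's reduction to the pipe-dream case.
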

\begin{proof}
Fix $n$ and a partition $\lambda$, and let $w$ be the corresponding Grassmannian permutation.  
Define a map $\phi : \SV_n(\lambda) \rightarrow \PD(w)$ as follows. 
Given $T\in \SV_n(\lambda)$, flip $T$ upside-down, and place it in the fourth quadrant so that the boxes of $T$ are placed exactly over the crosses of the pipe dream $P_{L(w)}$ associated to the Lehmer code of $w$.
Then for each label $i$ of $T$, turn it into a cross and move it $i+r-n-1$ steps northeast, where $r$ is the index of the row in which the cross starts. This map $\phi$ is, up to convention, the bijection of \cite[Theorem~5.5]{Knutson.Miller.Yong}.


We now show that the restriction of $\phi$ to $\QSV_n(\lambda)$ is a bijection from $\QSV_n(\lambda)$ to $\QPD(w)$. Let $T\in \SV_n(\lambda)$. Notice that under $\phi$, labels $i$ in boxes of $T$ become crosses in row $n+1-i$ of $\phi(T)$. 

First suppose $T$ is quasi-Yamanouchi. Then for every $i$, some instance of $i$ is weakly left of some instance of $i-1$ in $T$ (and in a different box). By semistandardness the box containing this $i$ is strictly below the box containing this $i-1$. Therefore, the cross corresponding to this $i$ moves weakly fewer steps northeast than the cross corresponding to this $i-1$, so there is a cross in row $n+1 - i$ weakly west of a cross in row $n+2 - i$ in $\phi(T)$, satisfying the quasi-Yamanouchi condition on these two rows. Since $i$ was arbitrary, $\phi(T)$ is therefore quasi-Yamanouchi.

Now suppose $T$ is not quasi-Yamanouchi. Then for some $i>1$, all the $i$'s in $T$ are strictly right of all the $i-1$'s, except possibly for a unique box containing both an $i$ and an $i-1$. If a label $i$ in $T$ is to the right of another label $i$, then by semistandardness the first label is also weakly above the second; hence, the cross of $\phi(T)$ corresponding to this first $i$ is right of the cross of $\phi(T)$ corresponding to the second $i$. Since moreover there cannot be two instances of $i$ in the same column of $T$, it is therefore enough to check that the cross of $\phi(T)$ corresponding to the leftmost $i$ in $T$ is strictly east of the cross corresponding to the rightmost $i-1$ in $T$. If there is a box ${\sf b}$ of $T$ containing both $i$ and $i-1$, then ${\sf b}$ contains the leftmost $i$ and rightmost $i-1$. The cross $\cross_i$ of $\phi(T)$ corresponding to this $i$ sits immediately northeast (and thus strictly east) of the cross $\cross_{i-1}$ corresponding to this $i-1$. If there is no such box ${\sf b}$, then let ${\sf b}_i$ denote the box of the leftmost $i$ and let ${\sf b}_{i-1}$ denote the box of the rightmost $i-1$. By semistandardness, ${\sf b}_i$ is weakly above ${\sf b}_{i-1}$. So the cross $\cross_i$ corresponding to the $i \in {\sf b}_i$ moves strictly more steps northeast than the cross $\cross_{i-1}$ corresponding to the $i-1 \in {\sf b}_{i-1}$. Therefore $\cross_i$ is strictly right of $\cross_{i-1}$ in $\phi(T)$ and $\phi(T)$ is not quasi-Yamanouchi.

Since $\phi : \SV_n(\lambda) \rightarrow \PD(w)$ is a bijection and we have just shown $\phi^{-1}(\QPD(w)) = \QSV_n(\lambda)$, it follows that the restriction $\phi |_{\QSV_n(\lambda)} : \QSV_n(\lambda) \to \QPD(w)$ is well-defined and bijective.

Finally, if $T\in \QSV_n(\lambda)$, then it is clear that $\wt(\phi(T)) = \rev(\wt(T))$. The theorem now follows from Theorem~\ref{thm:glide_expansion}.
\end{proof}

\begin{example}
Let $w=13524$. Then $L(w) = (0,1,2,0,0)$, $n=3$ and the partition $\lambda$ corresponding to $w$ is $(2,1)$. We have
\begin{eqnarray}\nonumber
\cgroth_{13524} = \cgroth_{(2,1)}(x_1, x_2, x_3) = x_1^2x_2 + x_1^2x_3 + x_1x_2^2 + 2x_1x_2x_3 + x_1x_3^2 + x_2^2x_3 + x_2x_3^2 \\ \nonumber
+ \beta x_1^2x_2^2 + 3\beta x_1^2x_2x_3 + \beta x_1^2x_3^2 +3\beta x_1x_2^2x_3 +3\beta x_1x_2x_3^2 +\beta x_2^2x_3^2 \\ \nonumber
 +2 \beta^2 x_1^2x_2^2x_3 +2\beta^2 x_1^2x_2x_3^2 +2\beta^2 x_1x_2^2x_3^2 +\beta^3 x_1^2x_2^2x_3^2.
\end{eqnarray}

The elements of $\QSV_3(2,1)$ are
  \[
  \ytableausetup{boxsize=2.4em}
  \begin{array}{cccc}
  \ytableaushort{11,2} & \hspace{10mm} \ytableaushort{12,2} & \hspace{10mm} \ytableaushort{1{1,2},2} & \hspace{10mm} \ytableaushort{12,{2,3}} \\
  \medskip \\
\ytableaushort{1{1,2},{2,3}}  & \hspace{10mm} \ytableaushort{1 {2,3}, {2,3}} & \hspace{10mm} \ytableaushort{1 {1,2,3},{2,3}} &
  \end{array}
  \]
  
Rather than summing over the 27 elements of $\SV_3(2,1)$ to obtain $\cgroth_{13524}$, we may use Theorem~\ref{thm:symmetric} to sum over the $7$ elements of $\QSV_3(2,1)$, obtaining:
\begin{equation*}
\pushQED{\qed}
\cgroth_{13524} = \cgroth_{(2,1)}(x_1, x_2, x_3) = \glide_{012} + \glide_{021} +\beta \glide_{022} +\beta \glide_{121} +\beta^2 \glide_{122} +\beta^2 \glide_{221} +\beta^3 \glide_{222}. \qedhere \popQED
\end{equation*} \let\qed\relax
\end{example}

\subsection{Quasisymmetric polynomials and stable limits of glide polynomials}
A polynomial $f \in {\rm Poly}_n$  is {\bf quasisymmetric} if the coefficient of $x_{i_1}^{a_1} \dots x_{i_k}^{a_k}$ is equal to the coefficient of $x_{j_1}^{a_1} \dots x_{j_k}^{a_k}$ for any two strictly increasing sequences $i_1 < \cdots < i_k$ and $j_1 < \cdots j_k$. These polynomials were introduced by I.~Gessel in \cite{Gessel}, who used them in the study of $P$-partitions. We write ${\rm QSym}_n$ for the subspace of quasisymmetic polynomials in ${\rm Poly}_n$.  I.~Gessel also defined the {\bf fundamental basis} $\{ F_a \}$ of ${\rm QSym}_n$, indexed by compositions:
\[ 
F_a(x_1, \dots, x_n) = \!\!\! \sum_{\substack{b \text{ is a weak composition} \\ \flatten(b) \text{ refines } a}} \!\!\!  x^b.
\]



In \cite{Lam.Pylyavskyy}, T. Lam--P. Pylyavskyy introduced the \emph{multi-fundamental quasisymmetric functions} (defined below), which form a basis of the ring of quasisymmetric functions (in countably-many variables). The multi-fundamental quasisymmetric functions are a $K$-theoretic analogue of I. Gessel's  \cite{Gessel} basis of fundamental quasisymmetric functions, and have been further studied in \cite{Patrias}.

Let $S_1$ and $S_2$ be nonempty subsets of $\mathbb{Z}_{>0}$. Say that $S_1<S_2$ if $\max(S_1)<\min(S_2)$, and $S_1\le S_2$ if $\max(S_1)\le \min(S_2)$. For a strong composition $a$, let $\tilde{A}_a$ be the collection of all chains $\sigma=(S_1,\ldots S_{|a|})$ of nonempty subsets of positive integers such $S_i<S_{i+1}$ if there is some $k$ such that $a_1+\ldots + a_k = i$, and $S_i\le S_{i+1}$ otherwise. 

The {\bf multi-fundamental quasisymmetric function} $\tilde{L}_a(\mathbf{x}) = \tilde{L}_a(x_1,x_2,\ldots )$ is defined by
\[\tilde{L}_a(\mathbf{x}) = \sum_{\sigma \in \tilde{A}_a} x^{\wt(\sigma)},\]
where the $i$th entry of $\wt(\sigma)$ is the number of occurrences of $i$ in $\sigma$.

We now show that the multi-fundamental quasisymmetric functions are the stable limits of the glide polynomials (specialized to $\beta=1$). Let $0^m  a$ denote the weak composition obtained by prepending $m$ zeros to $a$. 

\begin{theorem}\label{thm:stable_limit}
For any weak composition $a$,
\[\lim_{m\to \infty} \glide^{(1)}_{0^m a} = \tilde{L}_{\flatten(a)}(\mathbf{x}).\]
\end{theorem}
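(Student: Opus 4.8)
The plan is to establish a weight-preserving bijection between the combinatorial objects indexing the two sides. On the left, for $m$ large, $\glide^{(1)}_{0^m a}$ sums $x_1^{b_1}\cdots x_n^{b_n}$ (with $\beta=1$, so coefficient $1$ for every term regardless of excess) over all weak kompositions $b$ that are glides of $0^m a$. On the right, $\tilde L_{\flatten(a)}(\mathbf x)$ sums $x^{\wt(\sigma)}$ over chains $\sigma = (S_1,\dots,S_{|a|})$ of nonempty subsets of $\mathbb{Z}_{>0}$ satisfying the strict/weak comparisons dictated by the part-boundaries of $\flatten(a)$. So I would construct a bijection $\Phi$ from glides of $0^m a$ (for $m$ sufficiently large) to chains in $\tilde A_{\flatten(a)}$ such that $\wt(\sigma) = (b_1,b_2,\dots)$, i.e.\ the number of occurrences of variable $i$ among the sets equals $b_i$.

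First I would unwind the glide definition for the special composition $0^m a$. Writing $\flatten(a) = (\flatten(a)_1,\dots,\flatten(a)_{|a|})$, a glide $b$ of $0^m a$ is governed by indices $i_1 < \cdots < i_\ell$ with $\ell = |a| = $ number of nonzero parts, where each block-sum $b_{i_j+1}+\cdots+b_{i_{j+1}}$ equals $\flatten(a)_{j+1}$ plus the number of red entries in that block, the leading entry of each block is black, and the block endpoints respect the positions $n_{j+1}$ of the nonzero entries of $0^m a$. The key observation is that as $m\to\infty$ the constraints $i_{j+1} \le n_{j+1}$ become vacuous (they push the allowed positions arbitrarily far right), so in the limit the only surviving data is: a partition of the support of $b$ into $|a|$ consecutive nonempty blocks, each block accounting for $\flatten(a)_j$ black entries possibly interleaved with red entries, with a black entry leading each block. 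This is exactly the free combinatorial content of a multi-fundamental chain.

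Next I would define $\Phi$ concretely: given a glide $b$, block $j$ occupies variable-positions in the interval $(i_{j-1}, i_j]$, and I build the set $S_k$ (for $k$ the global index of a ``unit'' within the composition) by recording which variables $x_i$ contribute a $\cross$/exponent to the $k$th demanded cell, reading the black-then-red structure as the mechanism that forces strict increase at part-boundaries and allows weak increase (equality, via red/repeated entries) within a part. Concretely, a red entry corresponds to a variable index that is \emph{repeated} from the previous cell (so $S_k$ and $S_{k+1}$ share an element, forcing only $S_k \le S_{k+1}$), while a black entry forces a fresh strictly larger minimum, matching the $S_k < S_{k+1}$ condition at the boundaries $a_1+\cdots+a_j$. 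I would verify that the comparison constraints in the definition of $\tilde A_{\flatten(a)}$ correspond precisely to the placement of black versus red entries in a glide, and that $\wt(\sigma)$ reads off the exponent vector $b$. Finally I would check surjectivity and injectivity of $\Phi$ directly from the lack of choice in reconstructing $b$ from a chain $\sigma$ (assign variable $i$ to whichever cells contain $i$, coloring an entry red exactly when that variable is shared with the preceding cell).

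The main obstacle I anticipate is not the bijection itself but carefully matching the \emph{colored} bookkeeping of glides to the \emph{set-valued} bookkeeping of chains: I must show that the excess (red-count), which in general contributes powers of $\beta$, disappears at $\beta=1$ in a way that exactly corresponds to allowing sets $S_k$ of size greater than one (a set of size $t$ contributes $t-1$ repeated values, i.e.\ $t-1$ red entries). Making this dictionary precise --- that a $K$-theoretic ``multi'' occurrence of a value in the chain is the same datum as a maximal run of a variable across consecutive red entries in the glide --- is the crux, and I would spend the bulk of the argument confirming that the strict-versus-weak comparison rules translate correctly through this dictionary and that taking $m$ large genuinely removes the positional constraints $i_{j+1}\le n_{j+1}$ so that every chain in $\tilde A_{\flatten(a)}$ is realized.
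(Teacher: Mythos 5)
Your overall strategy coincides with the paper's: both proofs establish a weight-preserving bijection between glides of $0^m a$ (in the variables $x_1,\dots,x_m$) and chains in $\tilde{A}_{\flatten(a)}$ on the alphabet $\{1,\dots,m\}$, with the observation that for large $m$ the positional constraints $i_{j+1}\le n_{j+1}$ impose no restriction. The gap is in the dictionary itself, which you correctly identify as the crux but then state incorrectly. Your rule is that $b_j$ is colored red exactly when the value $j$ is \emph{repeated from the previous cell}, i.e.\ shared between consecutive sets $S_k$ and $S_{k+1}$. This does not produce glides: the glide condition forces the number of red entries to equal $\sum_k\bigl(|S_k|-1\bigr)$, whereas the number of values shared between consecutive cells is generally smaller. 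For instance, with $a=(0,0,0,0,3)$ and $\sigma=(\{1,3\},\{3,4\},\{5\})$, your rule colors only $b_3$ red, giving $(1,0,{\color{red}2},1,1)$ with excess $1$; but $\sum_i b_i = 5 = 3+2$, so a glide must have excess $2$, and indeed the correct image is $(1,0,{\color{red}2},{\color{red}1},1)$. Worse, your rule is not injective: the distinct chains $(\{1,3\},\{3,4\},\{5\})$ and $(\{1\},\{3\},\{3,4,5\})$ both receive the coloring $(1,0,{\color{red}2},1,1)$, while the paper's map sends them to the two distinct glides $(1,0,{\color{red}2},{\color{red}1},1)$ and $(1,0,2,{\color{red}1},{\color{red}1})$.

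The correct dictionary (the paper's) is: $b_j$ is red if and only if some instance of $j$ lies in a cell together with a \emph{strictly smaller} value, i.e.\ $j$ is non-minimal in some $S_k$. One checks that a given value can be non-minimal in at most one cell of the chain, so the red count is exactly $\sum_k(|S_k|-1)$ as required, and the inverse map is forced: a black nonzero entry $b_k$ opens fresh cells, while a red $b_k$ first deposits a $k$ into the currently open cell before continuing. Relatedly, your claim that ``a black entry forces a fresh strictly larger minimum, matching the $S_k<S_{k+1}$ condition at the boundaries'' conflates two different things: black entries occur throughout the interior of a block (one per cell, namely at each cell's minimum value), not only at the part boundaries of $\flatten(a)$; the strict inequalities at part boundaries are instead encoded by the block structure $i_1<\dots<i_\ell$ of the glide. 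Until the coloring rule is repaired along these lines, the proposed bijection does not exist as stated.
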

\begin{proof}
We give a bijection between the glides indexing monomials in $\glide^{(1)}_{0^m a}(x_1,\ldots x_m)$ and the chains $\sigma \in \tilde{A}_a$ indexing monomials in the truncation $\tilde{L}_{\flatten(a)}(x_1, \ldots , x_m)$.

Let $\sigma \in \tilde{A}_a$, where $\sigma$ uses numbers in $\{1,\ldots , m\}$ only. Then the corresponding glide $b$ is simply the weight vector $\wt(\sigma)$, with entries colored as follows: If some $j$ appears in the same subset as some $i<j$, then $b_j$ is red. Otherwise it is black. For example, let $a=(0,0,0,0,3)$, so $\flatten(a)=(3)$. If $\sigma = (\{1,3\}, \{3,4\}, \{5\})$, then $b = (1,0,{\color{red}{2}},{\color{red}{1}}, 1)$.

For the reverse direction, let $b$ be a glide of $0^m a$ such that $b_i=0$ for $i>m$. Then $\sigma$ partitions the collection of $b_1$ ones, $b_2$ twos, etc., into a chain of nonempty subsets of $\mathbb{Z}_{>0}$. 
Suppose the first nonzero entry of $b$ is $b_j$. Then the first $b_j-1$ subsets in $\sigma$ are all singletons $\{j\}$, and the final $j$ is assigned to the $b_j$th subset. If the next nonzero entry, say $b_k$, of $b$ is black, then the $b_j$th subset is also the singleton $\{j\}$; now continue the process with $b_k$. If, on the other hand, $b_k$ is red, then assign a $k$ to the $b_j$th subset and continue in this manner.
For example, let $a=(0,0,0,0,3)$, so $\flatten(a)=(3)$. If $b=(1,0,2,{\color{red}{1}},{\color{red}{1}})$, then $\sigma = (\{1\}, \{3\}, \{3,4,5\})$, while if $b=(1,0,{\color{red}{2}},1,{\color{red}{1}})$, then $\sigma = \{1,3\}, \{3\}, \{4,5\}$.

These maps are clearly mutually inverse.
\end{proof}

We say a polynomial in ${\rm Poly}_n[\beta]$ is {\bf quasisymmetric} if it lies in ${\rm QSym}_n[\beta]$. Define a weak composition $a$ to be {\bf quasiflat} if the nonzero entries of $a$ occur in an interval. In \cite{Assaf.Searles}, it was shown that $\Fund_a$ is quasisymmetric in $x_1,\ldots , x_n$ if and only if $a$ is quasiflat with last nonzero term in postition $n$, and that moreover in this case $\Fund_a = F_{\flatten(a)}(x_1,\ldots , x_n)$. Since $\Fund_a = \glide^{(0)}_a$, this immediately implies that $\glide^{(\beta)}_a$ is not quasisymmetric if $a$ is not quasiflat.

Using the glide polynomials, we define a family of polynomials $G^{(\beta)}_a$ indexed by strong compositions. 

\begin{definition}
Given a strong composition $a$, let the {\bf quasisymmetric glide} be
\[G^{(\beta)}_a(x_1,\ldots , x_n) = \begin{cases}
  \glide^{(\beta)}_{0^{n-\ell(a)} a} & \mbox{if $\ell(a)\le n$} \\
  0 & \mbox{otherwise}.
  \end{cases}\]
\end{definition}


The fact that $G^{(\beta)}_a$ is quasisymmetric, and that indeed $G^{(1)}_a$ is a truncation of $\tilde{L}_a$, follows immediately from the bijection in the proof of Theorem~\ref{thm:stable_limit} and the fact that no nonzero entry precedes a zero entry in $0^{n-\ell(a)} a$. Nonetheless, we will use combinatorics of glides to give a direct proof of quasisymmetry, proving moreover that $G_a$ expands positively in the basis of fundamental quasisymmetric polynomials. Define $\flatten(b)$ for a weak komposition $b$ to be the strong composition given by deleting all $0$ entries of $b$ and forgetting the coloring.

\begin{definition}
Let $a$ be a weak composition.
A glide $b$ of $a$ is {\bf unsplit} if 
\begin{itemize}
\item $b$ has the same number of nonzero black entries as $a$ does and
\item no $0$ in $b$ is right of a nonzero entry.
\end{itemize}

\end{definition}

\begin{theorem}\label{thm:quasisymmetric}
For a strong composition $a$ with $ \ell(a) \leq n$,
\[G_a(x_1,\ldots , x_n) = \sum_b \beta^{\ex(b)} F_{\flatten(b)}(x_1,\ldots , x_n), \] 
where the sum is over the unsplit glides $b$ of 
$0^{n-\ell(a)}  a$.
\end{theorem}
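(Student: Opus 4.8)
The plan is to establish the claimed expansion by partitioning the glides of $0^{n-\ell(a)} a$ according to a distinguished ``unsplit skeleton,'' and showing that each such skeleton $b$ contributes exactly $\beta^{\ex(b)} F_{\flatten(b)}(x_1,\ldots,x_n)$ to $G_a$. The key structural observation I would isolate first is that every glide $c$ of $0^{n-\ell(a)} a$ factors canonically as a combination of two independent operations on the nonzero black entries of $a$: (i) \emph{sliding} black entries leftward (possibly splitting a single entry into several black pieces, preserving left-to-right order and dominance), which is precisely the mechanism producing the fundamental-slide monomials, and (ii) \emph{appending red entries}, which record the $\beta$-weighted $K$-theoretic multiplicities. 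I would argue that the slide data alone determine a unique unsplit glide $b$ --- the one obtained by forgetting all the red entries and merging any adjacent black pieces that arose from splitting a single entry of $a$ back to their leftmost-consolidated positions subject to the unsplit conditions (same number of nonzero black entries as $a$, and no $0$ right of a nonzero entry).

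Next I would make this grouping precise by defining, for each unsplit glide $b$ of $0^{n-\ell(a)} a$, the fiber of glides $c$ that reduce to $b$, and verify two things. First, that summing $\beta^{\ex(c)} x^c$ over this fiber yields exactly $\beta^{\ex(b)} F_{\flatten(b)}(x_1,\ldots,x_n)$: here the factor $\beta^{\ex(b)}$ accounts for the red entries forced by $b$ itself, while the freedom to further slide and split the \emph{black} part of $b$ (without introducing new red entries, since $b$ is unsplit and any additional red entries would change $\ex$) reproduces precisely the monomials $x^{b'}$ with $b' \geq b$ and $\flatten(b')$ refining $\flatten(b)$, which is the defining sum for $F_{\flatten(b)}$. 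This is essentially the $\beta=0$ case packaged with the earlier theorem $\Fund_a = \glide^{(0)}_a$, applied fiberwise. Second, that these fibers partition \emph{all} glides of $0^{n-\ell(a)} a$, i.e., every glide reduces to a unique unsplit glide; this is where the bijection from the proof of Theorem~\ref{thm:stable_limit} can be invoked to guarantee the red/black bookkeeping is consistent and that no glide is counted twice or omitted.

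The main obstacle I anticipate is the bookkeeping needed to show that $\flatten(b)$ is the correct index --- that is, confirming that the black skeleton of $b$, after the unsplit consolidation, has composition content matching the support structure dictated by the glide conditions (the constraints $i_{j+1}\le n_{j+1}$ and the blackness of each $b_{i_j+1}$). I would need to check carefully that splitting an entry of $a$ into black pieces during the slide always produces a refinement of $\flatten(b)$, and conversely that every refinement arises, without the red entries interfering with the flattening. A related subtlety is ensuring the excess is additive in the correct way: the red entries in a general glide $c$ decompose as those already present in its skeleton $b$ (contributing the global $\beta^{\ex(b)}$) plus potentially none from the remaining slide freedom, which forces the claim that reducing to an unsplit glide never discards red entries beyond those recorded in $b$. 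Once this additivity and the refinement correspondence are pinned down, the theorem follows by summing the fiberwise identities over all unsplit glides $b$ of $0^{n-\ell(a)} a$, and quasisymmetry of $G_a$ is an immediate corollary since each $F_{\flatten(b)}(x_1,\ldots,x_n)$ is quasisymmetric.
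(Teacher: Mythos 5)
Your overall strategy is the same as the paper's: partition the glides of $0^{n-\ell(a)}a$ into fibers over distinguished unsplit ``skeletons'' and show each fiber sums to $\beta^{\ex(b)}F_{\flatten(b)}$. However, two points in your sketch are genuine gaps rather than deferred bookkeeping. First, the skeleton of a glide $c$ cannot be obtained by ``forgetting all the red entries.'' The excess is constant along each fiber and equals $\ex(b)$ --- that is the only source of the coefficient $\beta^{\ex(b)}$ --- so the reduction must \emph{retain} every red entry of $c$, merely consolidating entries rightward within each block of the glide decomposition; unsplit glides themselves carry red entries (e.g.\ $(1,{\color{red}1},{\color{red}1},2)$ is an unsplit glide of $(0,0,1,2)$). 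As literally described, your reduction (discard red entries, merge black pieces, push right) sends \emph{every} glide to the single excess-zero skeleton $0^{n-\ell(a)}a$, and the right-hand side would degenerate to $F_a$. This contradicts your own later assertion that $\beta^{\ex(b)}$ ``accounts for the red entries forced by $b$ itself,'' so the skeleton map is not actually pinned down in your proposal.

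Second, the heart of the proof --- that the fibers are pairwise disjoint and that the fiber over $b$ accounts for each monomial of $F_{\flatten(b)}$ exactly once with unchanged excess --- is precisely what you defer, and your proposed tool does not supply it. The paper's proof introduces four local moves on weak kompositions (sliding a black or red entry left past a zero, and splitting off a black piece to the left of a black or red entry), and then (i) checks these moves preserve both the excess and the composition $R_c$ recording the sums of entries between consecutive red entries, which is the invariant separating the fibers of distinct unsplit glides, and (ii) shows that from an unsplit $b$ the moves generate exactly the glides whose weights are the exponents of $F_{\flatten(b)}$, and that every glide is reachable from some unsplit glide. The bijection in the proof of Theorem~\ref{thm:stable_limit} identifies glides with chains of sets and says nothing about this fiber structure. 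Note also that distinct glides can share an underlying monomial (cf.\ the coefficient $3\beta\mathbf{x}^{1111}$ in Example~\ref{ex:glidepoly}), so ``each monomial of $F_{\flatten(b)}$ appears once in the fiber of $b$'' genuinely requires an argument such as the $R_c$ invariant; without it the claimed identity could overcount or undercount.
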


\begin{proof}
Suppose that $c$ is a glide of $0^{n-\ell(a)} a$. Observe that the following local operations on the weak komposition $c$ produce another glide of $0^{n-\ell(a)} a$:
\begin{enumerate}
\item replacing the subword $0k$ by $k0$,
\item replacing the subword $0\color{red}{k}$ by ${\color{red}{k}}0$,
\item replacing the subword $0k$ by $ij$ with $i+j = k$,
\item replacing the subword $0\color{red}{k}$ with $i\color{red}{j}$ with $i+j = k$.
\end{enumerate}

Let $b$ be a unsplit glide of $0^{n-\ell(a)} a$. It is clear by repeated application of (1)--(4) that all monomials of $F_{\flatten(b)}(x_1,\ldots , x_n)$ appear from glides of $0^{n-\ell(a)} a$.

Now suppose $b$ and $b'$ are distinct unsplit glides of $0^{n-\ell(a)} a$. We need to ensure repeated application of (1)--(4) to $b$ and $b'$ yields disjoint sets of glides of $0^{n-\ell(a)} a$. Since (1)--(4) preserve the number of red entries, we may assume that $b$ and $b'$ both have $r$ red entries. For any weak komposition $c$, let $R_c$ denote the strong composition whose $i$th entry is the sum of the entries of $c$ that are strictly right of the $(i-1)$th red entry and weakly left of the $i$th red entry. Clearly, if $d$ is obtained from $c$ by any of (1)--(4), then $R_c = R_d$. It remains to note that $R_b \neq R_{b'}$.

Finally, suppose $c$ is a glide of $0^{n-\ell(a)} a$. We need to show $c$ can be obtained from some unsplit glide $b$ of $0^{n-\ell(a)} a$ by repeated application of (1)--(4). By definition of glides, there exists a unique  sequence of nonnegative integers $i_1 < \dots < i_\ell$ such that
\begin{itemize}
\item $c_{i_j + 1} + \dots + c_{i_{j+1}} = \flatten(a)_{j+1} + \ex(c_{i_j+1}, \dots, c_{i_{j+1}})$,
\item $i_{j+1} \leq n_{j+1}$, and 
\item $c_{i_j + 1}$ is black.
\end{itemize} 
In each block $(c_{i_j + 1}, \dots, c_{i_{j+1}})$, shift and combine entries to the right as much as possible by the inverses of (1)--(4). Concatenate the results in order into a new weak komposition $c'$.
Then push all entries of $c'$ as far right as possible by the inverses of (1) and (2). The result $b$ is a glide of $0^{n-\ell(a)} a$, since all entries of $0^{n-\ell(a)} a$ are themselves as far right as possible. Since $b$ has exactly one black entry for each block of $c$, $b$ is a unsplit glide of $0^{n-\ell(a)} a$, and since we obtained $b$ from $c$ by applying only the inverses of (1)--(4), $c$ is associated to the unsplit glide $b$.
\end{proof}

\begin{example}
\pushQED{\qed}
Let $a = (1,2)$ and $\mathbf{x} = (x_1, x_2, x_3, x_4)$. Then Theorem~\ref{thm:quasisymmetric} gives that 
\begin{align*}
G_{(1,2)}(\mathbf{x}) = &F_{(1,2)}(\mathbf{x}) + 2 \beta F_{(1,1,2)}(\mathbf{x}) + \beta F_{(1,2,1)}(\mathbf{x}) \\ 
&+ 3 \beta^2  F_{(1,1,1,2)}(\mathbf{x}) + 2 \beta^2  F_{(1,1,2,1)}(\mathbf{x})  + \beta^2  F_{(1,2,1,1)}(\mathbf{x}),
\end{align*} because the unsplit glides of $(0,0,1,2)$ are \[
\begin{array}{ccccc}
(0,0,1,2) & (0,1, {\color{red}1},2) & (0,1,1,{\color{red}2}) & (0,1,2,{\color{red}1}) & (1,{\color{red}1},  {\color{red}1},2) \\ (1,{\color{red}1},1,{\color{red} 2}) & (1,1,{\color{red}1},{\color{red}2}) & (1,{\color{red}1},2,{\color{red}1}) & (1,1,{\color{red}2},{\color{red}1}) & (1,2,{\color{red}1},{\color{red}1})
\end{array}. \qedhere \popQED \] \let\qed\relax
\end{example}

\begin{corollary}\label{cor:lowestisfundamental}
The fundamental quasisymmetric polynomials are a specialization of the quasisymmetric glide polynomials. More precisely, 
\[F_a(x_1,\ldots , x_n) = G^{(0)}_a(x_1,\ldots , x_n).\]
\end{corollary}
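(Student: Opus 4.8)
The plan is to prove Corollary~\ref{cor:lowestisfundamental} by specializing the formula of Theorem~\ref{thm:quasisymmetric} at $\beta = 0$. Setting $\beta = 0$ in the identity
\[
G_a(x_1,\ldots,x_n) = \sum_b \beta^{\ex(b)} F_{\flatten(b)}(x_1,\ldots,x_n)
\]
annihilates every summand whose indexing unsplit glide $b$ has positive excess, so only the terms with $\ex(b) = 0$ survive. Thus $G_a^{(0)}(x_1,\ldots,x_n) = \sum_b F_{\flatten(b)}(x_1,\ldots,x_n)$, where the sum is now over the \emph{excess-zero} unsplit glides $b$ of $0^{n-\ell(a)}a$.

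The key step is then to show that there is exactly one such glide, and that it flattens to $a$. First I would recall that an excess-zero weak komposition has all entries colored black, so $\ex(b) = 0$ forces every nonzero entry of $b$ to be black. Combined with the first defining condition of an unsplit glide---that $b$ has the same number of nonzero black entries as $0^{n-\ell(a)}a$, namely $\ell(a)$---this means $b$ has exactly $\ell(a)$ nonzero entries, all black. Since an excess-zero glide of $a$ is (by the proof of the fundamental slide specialization theorem earlier in the excerpt) obtained by shifting or splitting entries of $a$ leftward while preserving relative order, having no more nonzero entries than $a$ rules out any splitting; hence $b$ is obtained purely by leftward shifts and $\flatten(b) = \flatten(0^{n-\ell(a)}a) = a$.

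Finally I would invoke the second defining condition of an unsplit glide---that no $0$ in $b$ lies to the right of a nonzero entry---to pin down $b$ uniquely: together with the above this forces the $\ell(a)$ nonzero entries to occupy the leftmost $\ell(a)$ positions in their original relative order, i.e.\ $b = 0^{n-\ell(a)}a$ with the zeros pushed... more carefully, $b$ is the weak komposition whose nonzero part equals $a$ placed flush-left. So the sum collapses to the single term $F_{\flatten(b)} = F_a$, yielding $G_a^{(0)} = F_a(x_1,\ldots,x_n)$ as desired.

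I expect the main obstacle to be the bookkeeping in the final step: verifying that the combination of ``same number of nonzero black entries as $0^{n-\ell(a)}a$'' and ``no zero right of a nonzero entry'' genuinely forces a \emph{unique} excess-zero unsplit glide, rather than merely restricting the shape. One must check that no nontrivial leftward shift of the nonzero entries produces a distinct valid unsplit glide of the same flattening---but since an unsplit glide with no splitting and no trailing interior zeros is completely determined by its flattening and its length, uniqueness follows, and every candidate indeed flattens to $a$. This routine but careful argument completes the proof.
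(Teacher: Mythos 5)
Your proposal is correct and matches the paper's (implicit) argument: the corollary is stated without proof as an immediate consequence of Theorem~\ref{thm:quasisymmetric}, and specializing that identity at $\beta=0$ so that only the excess-zero unsplit glides survive is exactly the intended route. One small slip in your final step: the condition that no $0$ of $b$ lies to the right of a nonzero entry forces the $\ell(a)$ nonzero entries to be flush \emph{right}, so the unique excess-zero unsplit glide is $0^{n-\ell(a)}a$ itself rather than anything ``placed flush-left''; this does not affect the conclusion, since $\flatten(b)=a$ either way and the sum still collapses to the single term $F_a$.
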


\begin{remark}
Our Theorem~\ref{thm:quasisymmetric} (at $\beta = 1$) is a finite-variable analogue of \cite[Theorem~5.12]{Lam.Pylyavskyy}, which is instead expressed in the language of injective order-preserving maps between chain posets. 
\end{remark}

\begin{corollary}\label{cor:Gexpansion}
\begin{equation}\nonumber 
\cgroth_\lambda(x_1,\ldots x_n) =  \sum_{T \in \QSV_n(\lambda)} \beta^{|T|-|\lambda|}G_{\rev(\wt(T))}(x_1,\ldots , x_n).
\end{equation}
\end{corollary}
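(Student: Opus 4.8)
The plan is to deduce Corollary~\ref{cor:Gexpansion} directly from the two theorems already at our disposal, namely Theorem~\ref{thm:symmetric} and Theorem~\ref{thm:quasisymmetric}, together with the relationship between glide polynomials and quasisymmetric glide polynomials encoded in the definition of $G_a$. The key observation is that Theorem~\ref{thm:symmetric} already expresses $\cgroth_\lambda(x_1,\ldots,x_n)$ as a sum over $\QSV_n(\lambda)$ of terms $\beta^{|T|-|\lambda|}\glide_{\rev(\wt(T))}$, so the only gap between that statement and the desired corollary is the replacement of each $\glide_{\rev(\wt(T))}$ by the corresponding $G_{\rev(\wt(T))}$.

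First I would examine, for a fixed $T \in \QSV_n(\lambda)$, the weak composition $\rev(\wt(T))$ appearing as an index in Theorem~\ref{thm:symmetric}. The entries of $\wt(T)$ record occurrences of the labels $1,\ldots,n$, and reversing produces a weak composition of length $n$. The crucial structural point is that a quasi-Yamanouchi set-valued tableau has no ``gaps'' of unused small labels past the first used one once reversed appropriately: concretely, I would verify that $\rev(\wt(T))$ is of the form $0^{m}\,c$ where $c = \flatten(\rev(\wt(T)))$ is a strong composition and $m = n - \ell(c)$. This is exactly the shape of index for which $\glide_{0^{n-\ell(c)}c}$ coincides with $G_c(x_1,\ldots,x_n)$ by the definition of the quasisymmetric glide. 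Hence for each such $T$ we obtain the identification $\glide_{\rev(\wt(T))} = G_{\rev(\wt(T))}(x_1,\ldots,x_n)$, where on the right $\rev(\wt(T))$ is read as the strong composition $c$.

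Once this term-by-term identification is established, substituting into the formula of Theorem~\ref{thm:symmetric} immediately yields
\begin{equation*}
\cgroth_\lambda(x_1,\ldots,x_n) = \sum_{T \in \QSV_n(\lambda)} \beta^{|T|-|\lambda|} G_{\rev(\wt(T))}(x_1,\ldots,x_n),
\end{equation*}
which is precisely the claim of the corollary. I would therefore structure the proof as: (i) recall Theorem~\ref{thm:symmetric}; (ii) argue that for $T \in \QSV_n(\lambda)$ the leading zeros of $\rev(\wt(T))$ number exactly $n - \ell(\flatten(\rev(\wt(T))))$, so that the nonzero entries form an initial segment after the zeros with no trailing zeros beyond position $n$; (iii) apply the definition of $G$ to rewrite each glide summand.

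The main obstacle, and the only real content beyond bookkeeping, is step (ii): verifying that $\rev(\wt(T))$ genuinely has its nonzero entries packed immediately after a block of leading zeros with the last nonzero entry in position $n$. This hinges on the quasi-Yamanouchi condition. Since the largest label in any $T \in \QSV_n(\lambda)$ is at most $n$ but the label $n$ need not actually appear, I must check that if label $n$ is absent then $\wt(T)_n = 0$, contributing a leading zero after reversal, and more subtly that the quasi-Yamanouchi property forbids the label set from skipping intermediate values in a way that would leave an interior zero in $\flatten$. In fact the definition of $G_a$ only requires that the index passed to $\glide$ be of the form $0^{n-\ell(a)}a$ for a strong composition $a$, so what I really need is that $\rev(\wt(T))$ has no zero to the right of any nonzero entry; equivalently $\wt(T)$ has no zero to the left of any nonzero entry, i.e.\ the labels used form an initial interval $\{1,2,\ldots,k\}$. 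This last fact is exactly the quasi-Yamanouchi condition of Definition~\ref{def:QSV}: if some label $i>1$ is used but $i-1$ is not, then no instance of $i$ can be weakly left of any $i-1$, violating quasi-Yamanouchiness. Pinning down this implication cleanly is where the argument must be most careful, but it is a short consequence of the definition rather than a substantial new difficulty.
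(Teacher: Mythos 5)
Your proposal is correct and follows essentially the same route as the paper: the paper's proof likewise observes that the quasi-Yamanouchi condition (Definition~\ref{def:QSV}) forces the labels used in $T$ to form an initial interval, so that $\wt(T)$ is a strong composition up to trailing zeros, and then invokes Theorem~\ref{thm:symmetric} together with the definition of $G_a$. Your extra care in step (ii) about reading $\rev(\wt(T))$ as $0^{n-\ell(c)}c$ is exactly the point the paper treats as immediate.
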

\begin{proof}
By definition, if a quasi-Yamanouchi tableau $T$ uses the label $i>1$, it must also use the label $i-1$. Hence $\wt(T)$ is a strong composition (up to trailing $0$s). 
The corollary is then immediate from Theorem~\ref{thm:symmetric}.
\end{proof}

Specializing Corollary~\ref{cor:Gexpansion} to $\beta = 0$ recovers \cite[Theorem~2.7]{Assaf.Searles}, a rephrasing of I.~Gessel's celebrated expression \cite{Gessel}  for writing a {\bf Schur polynomial} $s_\lambda := \cgroth_\lambda^{(0)}$ as a sum of fundamental quasisymmetric polynomials. Specializing instead to $\beta = 1$ essentially gives an alternate formulation of (a special case of) \cite[Theorem~5.6]{Lam.Pylyavskyy} about expansions into multi-fundamental quasisymmetric functions.

\begin{theorem}\label{thm:quasibasis}
The set $\{\beta^k G_a : k \in \mathbb{Z}_{\geq 0} \text{ and } \ell(a)\le n \}$ is a basis of the ring of quasisymmetric polynomials ${\rm QSym}_n[\beta]$. Hence $\{G^{(-1)}_a : \ell(a)\le n \}$ is a basis of ${\rm QSym}_n$.
\end{theorem}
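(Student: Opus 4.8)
The plan is to mirror the triangularity argument from the proof of Theorem~\ref{thm:basis}, but carried out inside $\QSym_n[\beta]$ rather than all of ${\rm Poly}_n[\beta]$. Linear independence will be free: each $G_a = \glide_{0^{n-\ell(a)} a}$ is one of the glide polynomials $\glide_c$ appearing in the basis of Theorem~\ref{thm:basis}, and the assignment $a \mapsto 0^{n-\ell(a)} a$ is injective (one recovers $a$ by flattening and the number of prepended zeros is determined by $\ell(a)$ and $n$). Hence $\{\beta^k G_a\}$ is a subset of a $\mathbb{Z}$-basis of ${\rm Poly}_n[\beta]$, so it is linearly independent. All of the content is therefore in showing that $\{\beta^k G_a\}$ spans $\QSym_n[\beta]$.

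For spanning, I would reuse verbatim the total order $\prec$ on $\mathcal{M}$ and the fact (from the proof of Theorem~\ref{thm:basis}) that the $\prec$-leading term of $\glide_c$ is $\beta^0 \mathbf{x}^c$, together with the fact that each $G_a$ is quasisymmetric (Theorem~\ref{thm:quasisymmetric}). Given a nonzero $p \in \QSym_n[\beta]$ with $\prec$-leading term $c\,\beta^k \mathbf{x}^d$, the key structural claim is that $d$ must be \emph{left-zero-justified}, i.e. $d = 0^{n-\ell}\flatten(d)$ where $\ell = \ell(\flatten(d))$. This is forced by quasisymmetry: every monomial $\mathbf{x}^{d'}$ with $\flatten(d') = \flatten(d)$ occurs in $p$ with the same coefficient, so $d$ must be the $\prec$-greatest member of this set; and since all such $d'$ carry the same number of zeros, $\prec$-greatness among them reduces to reverse-lexicographic greatness, which I claim is achieved exactly by packing the zeros to the front. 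Granting this, $G_{\flatten(d)} = \glide_{0^{n-\ell}\flatten(d)}$ has $\prec$-leading term $\beta^0 \mathbf{x}^d$, so $p - c\,\beta^k G_{\flatten(d)}$ is again quasisymmetric (a difference of quasisymmetric polynomials) with strictly $\prec$-smaller leading term. Since $\prec$ has no infinite descending chains on $\mathcal{M}$, iterating this reduction expresses $p$ as a finite $\mathbb{Z}$-linear combination of the $\beta^k G_a$.

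The one step demanding care, and the main obstacle, is the reverse-lexicographic claim: among all length-$n$ weak compositions $d'$ with a fixed flattening $e$ (hence a fixed number $n-\ell(e)$ of zeros), the $\prec$-greatest one is $0^{n-\ell(e)} e$. To prove this I would pass to reversed sequences, under which $\prec$ (restricted to a fixed zero-count) becomes ordinary lexicographic comparison read left to right, and under which $0^{n-\ell(e)} e$ becomes $(e_\ell, \dots, e_1, 0, \dots, 0)$. The reversed competitors all spell out the reversed nonzero word $e_\ell, \dots, e_1$ in order, with $n - \ell(e)$ zeros interspersed; lexicographic maximality pushes each positive (hence larger-than-zero) entry as far left as possible, forcing all zeros to the tail, which is precisely $0^{n-\ell(e)} e$ reversed. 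A short induction on the position of the first zero makes this rigorous. I would also note, when performing the subtraction, that multiplying $\glide_{0^{n-\ell}\flatten(d)}$ by $\beta^k$ does not disturb the comparison: every non-leading term $\beta^{\ex(b)}\mathbf{x}^b$ has $b \neq d$ as a composition, so the comparison with $\beta^k\mathbf{x}^d$ is already decided by the zero-count and reverse-lexicographic criteria before the $\beta$-degree tiebreaker is ever consulted.

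Finally, the second sentence is immediate from the first by specializing $\beta = -1$, exactly as in Theorem~\ref{thm:basis}: the leading-term triangularity persists at $\beta = -1$, yielding both spanning and independence of $\{G^{(-1)}_a : \ell(a) \le n\}$ in $\QSym_n$.
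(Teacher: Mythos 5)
Your proof is correct, but the spanning argument takes a genuinely different route from the paper's. The paper gets spanning essentially for free from previously established results: by Corollary~\ref{cor:lowestisfundamental}, $G_a^{(0)} = F_a$, so each $G_a$ is unitriangular over Gessel's fundamental basis with respect to the $\beta$-filtration, and since $\{F_a : \ell(a) \le n\}$ is already known to be a basis of $\QSym_n$, the set $\{\beta^k G_a\}$ spans $\QSym_n[\beta]$ (implicitly by induction on $\beta$-degree, using Theorem~\ref{thm:quasisymmetric} to see that the higher-$\beta$ corrections are again quasisymmetric). You instead rerun the leading-monomial elimination of Theorem~\ref{thm:basis} entirely inside $\QSym_n[\beta]$, which requires the one genuinely new observation that the paper's argument never needs: quasisymmetry forces the $\prec$-leading exponent of any $p \in \QSym_n[\beta]$ to be left-zero-justified, i.e.\ of the form $0^{n-\ell}\flatten(d)$, which is exactly the exponent where some $G_{\flatten(d)}$ has its leading term. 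Your verification of that claim (reversing sequences, reducing to lexicographic maximality of interleavings of a positive word with zeros) is sound and consistent with the paper's convention, which is pinned down by the assertion that the $\prec$-leading term of $\glide_a$ is $\beta^0\mathbf{x}^a$; your remark that multiplying by $\beta^k$ does not disturb the comparison is also a point worth making explicit. What each approach buys: the paper's is shorter and reuses the known basis $\{F_a\}$, while yours is more self-contained --- it needs Theorem~\ref{thm:quasisymmetric} only for the fact that $G_a$ is quasisymmetric, not for its $F$-expansion, and it does not invoke the completeness of the fundamental basis at all. Both handle linear independence identically, and the $\beta = -1$ specialization is treated at the same (terse but acceptable) level of detail in both.
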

\begin{proof}
The map $a \mapsto 0^{n-\ell(a)} a$ is injective when $\ell(a)\le n$, and by Theorem~\ref{thm:basis} the polynomials $\{ \beta^k \glide_a \}$ are linearly independent. Hence $\{\beta ^k G_a : k \in \mathbb{Z}_{\geq 0} \text{ and } \ell(a)\le n \}$ is linearly independent. Since $G_a^{(0)} = F_a$ by Corollary~\ref{cor:lowestisfundamental} and $\{F_a : \ell(a)\le n \}$ is a basis of ${\rm QSym}$, the set $\{\beta ^k G_a : k \in \mathbb{Z}_{\geq 0} \text{ and } \ell(a)\le n \}$ spans ${\rm QSym}_n[\beta]$. Thus it is a basis of ${\rm QSym}_n[\beta]$. The second sentence of the theorem is then immediate.
\end{proof}

Putting together results of this section and the previous, we have the following relationships between bases of $\QSym_n$ and $\Poly_n$. Here upward arrows represent a lifting from quasisymmetric polynomials to polynomials, and rightward arrows represent a lift from ordinary cohomology to connective $K$-theory.

\[
\begin{tikzcd}
\{\Fund_b\} \subset \Poly_n \arrow[dashed]{r} \text{(\cite{Assaf.Searles})}  & \{\glide_b\} \subset \Poly_n \text{\phantom{(\cite{Assaf.Searles})}}  \\ 
\{F_a\} \subset \QSym_n \text{(\cite{Gessel})} \arrow[dashed]{r} \arrow[hookrightarrow]{u}  & \{G_a\} \subset \QSym_n \arrow[hookrightarrow]{u} \text{(cf.~\cite{Lam.Pylyavskyy})}  \\
\end{tikzcd}
\]

%


\section{Multiplication of glide polynomials}\label{sec:multiplication}
By Theorem~\ref{thm:basis}, the glide polynomials form a basis of ${\rm Poly}_n[\beta]$. Hence the product of two glide polynomials can be written uniquely as a sum of glide polynomials times powers of $\beta$. In this section, we show that this sum involves only \emph{positive} coefficients. We give an explicit positive combinatorial formula for these structure constants, extending the rule of S.~Assaf and D.~Searles~\cite[Theorem~5.11]{Assaf.Searles} for the multiplication of fundamental slide polynomials. Our rule also essentially restricts to \cite[Proposition~5.9]{Lam.Pylyavskyy} in the quasisymmetric (and $\beta = 1$) case, though we have some additional complexity related to having finitely-many variables. 

\subsection{The genomic shuffle product}
Here we give a reformulation of the \emph{multishuffle product} of \cite{Lam.Pylyavskyy}, a $K$-theoretic generalization of the \emph{shuffle product} of S.~Eilenberg--S.~Mac~Lane~\cite{Eilenberg.MacLane}. This reformulation is necessary for the statement of our Littlewood-Richardson rule. In this reformulation, we refer to the multishuffle product as the \emph{genomic shuffle product} because of resemblances to the \emph{genomic tableau} theory for (torus-equivariant) $K$-theoretic Schubert calculus introduced in \cite{Pechenik.Yong:KT} and further expounded in \cite{Pechenik.Yong:genomic}.

First we recall the classical shuffle product of S.~Eilenberg--S.~Mac~Lane. Let $A = A_1 A_2 \ldots A_n$ and $B = B_1 \ldots B_m$ be words on disjoint alphabets $\mathcal{A}$ and $\mathcal{B}$, respectively. The {\bf shuffle product} $A \shuffle B$ of $A$ and $B$ is the set of all permutations of the concatenation $AB$ such that the subword on the alphabet $\mathcal{A}$ is $A$ and the subword on the alphabet $\mathcal{B}$ is $B$.

\begin{example}\label{ex:shuffle_product}
The shuffle product of $331$ and $62$ is the set 
\[ \pushQED{\qed}
331 \shuffle 62 = \{ 62331, 63231, 63321, 63312, 36231, 36321, 36312, 33621, 33612, 33162 \}. \qedhere \popQED 
\] \let\qed\relax 
\end{example}

Add a superscript to each letter of $A$ so that, if $A_h$ is the $j$th instance of $i$ in $a$ (counting from left to right), it becomes $i^j$. Write $A^\gen$ for this superscripted version of $A$. Add superscripts to $B$ to obtain $B^\gen$ in the same way. For an alphabet $\mathcal{A}$, let $\mathcal{A}^\gen$ denote the set of symbols $i^j$, where $i \in \mathcal{A}$ and $j \in \mathbb{Z}_{>0}$. 
For $A$ a word in $\mathcal{A}^\gen$, a {\bf genotype}\footnote{See \cite{Pechenik.Yong:genomic} for motivation of this terminology.} is given by deleting all superscripts from any subword obtained by deleting all but one instance of each symbol $i^j$.
Let $A$ and $B$ be words in the alphabets $\mathcal{A}$ and $\mathcal{B}$, respectively. The {\bf genomic shuffle product} $A \shuffle_\gen B$ of $A$ and $B$ is the set of all words in the alphabet $(\mathcal{A} \sqcup \mathcal{B})^\gen$ such that
\begin{itemize}
\item if $i^j$ appears to the left of $i^k$, then $j \leq k$;
\item no two instances of $i^j$ are consecutive;
\item every genotype is an element of $A^\gen \shuffle B^\gen$.
\end{itemize}


\begin{remark}
The original definition of $\shuffle_\gen$ by T.~Lam--P.~Pylyavskyy \cite{Lam.Pylyavskyy} avoids reference to genotypes. We will need this language of genotypes to formulate some extra relations in dominance order that are necessary for describing the more-general structure constants of the glide basis. In the quasisymmetic case, if one works in countably-many variables, one may simplify our Littlewood-Richardson rule to coincide with their \cite[Proposition~5.9]{Lam.Pylyavskyy}.
\end{remark}

\begin{example}\label{ex:genomic_shuffle_product}
The genomic shuffle product $331 \shuffle_\gen 62$ is an infinite set of words, but contains finitely many words of any fixed length. It contains the $10$ words of length $5$ that are in $331 \shuffle 62$ (but with superscripted $1$'s on every letter, except the second $3$ which has a superscripted $2$), together with $35$ words of length $6$, $81$ words of length $7$, $154$ words of length $8$, and many longer words. The words of length $6$ in $331 \shuffle_\gen 62$ are
\[\begin{array}{ccccc}
6^1 3^1 2^1 3^1 3^2 1^1 & 3^1 6^1 2^1 3^1 3^2 1^1 & 3^1 6^1 3^1 2^1 3^2 1^1 & 3^1 6^1 3^1 3^2 2^1 1^1 & 3^1 6^1 3^1 3^2 1^1 2^1 \\ 
6^1 3^1 3^2 2^1 3^2 1^1 & 3^1 6^1 3^2 2^1 3^2 1^1 & 3^1 3^2 6^1 2^1 3^2 1^1 & 3^1 3^2 6^1 3^2 2^1 1^1 & 3^1 3^2 6^1 3^2 1^1 2^1 \\
6^1 3^1 3^2 1^1 2^1 1^1 & 3^1 6^1 3^2 1^1 2^1 1^1 & 3^1 3^2 6^1 1^1 2^1 1^1 & 3^1 3^2 1^1 6^1 2^1 1^1 & 3^1 3^2 1^1 6^1 1^1 2^1 \\
3^1 3^2 6^1 1^1 6^1 2^1 & 3^1 6^1 3^2 1^1 6^1 2^1 & 3^1 6^1 3^2 6^1 1^1 2^1 & 3^1 6^1 3^2 6^1 2^1 1^1 & 6^1 3^1 3^2 1^1 6^1 2^1 \\
6^1 3^1 3^2 6^1 1^1 2^1 & 6^1 3^1 3^2 6^1 2^1 1^1 & 6^1 3^1 6^1 3^2 1^1 2^1 & 6^1 3^1 6^1 3^2 2^1 1^1 & 6^1 3^1 6^1 2^1 3^2 1^1 \\
3^1 3^2 6^1 2^1 1^1 2^1 & 3^1 6^1 3^2 2^1 1^1 2^1 & 3^1 6^1 2^1 3^2 1^1 2^1 & 3^1 6^1 2^1 3^2 2^1 1^1 & 6^1 3^1 3^2 2^1 1^1 2^1 \\
6^1 3^1 2^1 3^2 1^1 2^1 & 6^1 3^1 2^1 3^2 2^1 1^1 & 6^1 2^1 3^1 3^2 1^1 2^1 & 6^1 2^1 3^1 3^2 2^1 1^1 & 6^1 2^1 3^1 2^1 3^2 1^1 
\end{array} \]
The two genotypes of $6^1 3^1 2^1 3^1 3^2 1^1$ are $63231$ and $62331$. 
\end{example}

\subsection{The glide product on weak compositions}

Let $S$ be a sequence of words in the alphabet $\mathcal{A}$ and let $\mathcal{B} \subseteq \mathcal{A}$ be a subalphabet. Then the {\bf $\mathcal{B}$-composition} ${\sf Comp}_{\mathcal{B}}(S)$ of $S$ is the weak composition whose $i$th coordinate is the number of letters of $\mathcal{B}$ in the $i$th word of $S$.
If $\mathcal{B} = \mathcal{A}$, we may drop $\mathcal{B}$ from the notation.

Order the alphabet $\mathbb{Z}_{>0}^\gen$ lexicographically; that is, $i^j < k^\ell$ if either $i < k$ or else $i=k$ and $j < \ell$. If $C$ is a word in $\mathbb{Z}_{>0}^\gen$, its {\bf run structure} ${\sf Runs}(C)$ is the sequence of successive maximally increasing runs of the symbols $i^j$ read from left to right. 
A {\bf genotype} of ${\sf Runs}(C)$  is given by deleting all superscripts from a sequence that comes from deleting all but one instance of each symbol $i^j$ in ${\sf Runs}(C)$. In particular, a genotype $G$ of ${\sf Runs}(C)$ is a sequence of (possibly empty) words in the alphabet $\mathbb{Z}_{>0}$.  

\begin{example}
Let $C = 6^1 3^1 6^1 3^2 1^1 2^1$. Then the run structure of $C$ is ${\sf Runs}(C) = (6^1, 3^1 6^1, 3^2, 1^1 2^1)$ and so ${\sf Comp}({\sf Runs}(C)) = (1,2,1,2)$. There are two genotypes of ${\sf Runs}(C)$, namely $G_1 = (6, 3, 3, 1 2)$ and $G_2 = (\epsilon , 3 6, 3, 1 2)$, where $\epsilon$ denotes the empty word. If $\mathcal{B}$ denotes the alphabet of even positive integers, then ${\sf Comp}_\mathcal{B}(G_1) = (1,0,0,1) $ and ${\sf Comp}_\mathcal{B}(G_2) = (0,1,0,1)$.
\end{example}

\begin{definition}
Let $a, b$ be weak compositions of length $n$. Let $A$ and $B$ be the words $A := (2n-1)^{a_1} \cdots (3)^{a_{n-1}} (1)^{a_n}$ and $B := (2n)^{b_1} \cdots (4)^{b_{n-1}} (2)^{b_n}$. Define the {\bf genomic shuffle set} ${\rm GSS}(a,b)$ of $a$ and $b$ by
\[
{\rm GSS}(a,b) := \{ C \in A \shuffle_\gen B : \text{for every genotype $G$ of ${\sf Runs}(C)$, ${\sf Comp}_\mathcal{A}(G) \geq a$ and ${\sf Comp}_\mathcal{B}(G) \geq b$}\},
\]
where $\mathcal{A}, \mathcal{B}$ respectively denote the alphabets of odd and even positive integers.
\end{definition}

\begin{example}\label{ex:genomic_shuffle_set}
Let $a = 021$ and $b = 101$. Then $A = 331$ and $B = 62$. The genomic shuffle product $331 \shuffle_\gen 62$ is (partially) described in Example~\ref{ex:genomic_shuffle_product}. We have ${\rm GSS}(021,101) = 
\left\{ \small{ \begin{array}{ccc}
6^1 2^1 3^1 3^2 1^1 & 6^1 2^1 3^1 3^2 1^1 2^1 & 3^1 6^1 2^1 3^1 3^2 1^1 2^1 \\
6^1 3^1 3^2 1^1 2^1 & 3^1 6^1 2^1 3^1 3^2 1^1  & 3^1 3^2 6^1 2^1 3^2 1^1 2^1 \\
3^1 6^1 2^1 3^2 1^1 & 3^1 6^1 3^1 3^2 1^1 2^1 & 3^1 3^2 6^1 1^1 2^1 1^1 2^1 \\
3^1 6^1 3^2 1^1 2^1 &3^1 3^2 6^1 2^1 3^2 1^1 & \\
3^1 3^2 6^1 2^1 1^1 & 3^1 3^2 6^1 3^2 1^1 2^1 & \\
3^1 3^2 6^1 1^1 2^1 & 3^1 6^1 2^1 3^2 1^1 2^1 & \\
& 3^1 3^2 6^1 2^1 1^1 2^1 & \\
& 3^1 3^2 6^1 1^1 2^1 1^1 &
\end{array} } \right\}.
$
\end{example}

Note that, while $A \shuffle_\gen B$ is usually an infinite set, ${\rm GSS}(a,b)$ is necessarily finite, since certainly no element of ${\rm GSS}(a,b)$ can have length more than $n \cdot (|a| + |b|)$. (We will significantly improve this upper bound in Corollary~\ref{cor:GSS_max_length}.)

\begin{remark}
Although it is convenient to define ${\rm GSS}(a,b)$ by a condition on all genotypes, in fact it is sufficient to verify this condition on a particular `worst' genotype. Specifically let $\hat{G}$ be the genotype of ${\sf Runs}(C)$ obtained by preserving the rightmost instance of each letter and deleting the others. Then $\hat{G}$ satisfies the desired dominance conditions if and only if every genotype of ${\sf Runs}(C)$ does.
\end{remark}

\begin{definition}
Let $a, b$ be weak compositions of length $n$. 
For $C \in {\rm GSS}(a,b)$, let ${\sf BumpRuns}(C)$ denote the unique dominance-minimal way to insert words of length $0$ into ${\sf Runs}(C)$ while preserving ${\sf Comp}_\mathcal{A}(G) \geq a$ and ${\sf Comp}_\mathcal{B}(G) \geq b$ for every genotype $G$ of ${\sf BumpRuns}(C)$. 
The {\bf glide product} $a \shuffle_\gen b$ of $a$ and $b$ is the \emph{multiset} of weak compositions 
\[
a \shuffle_\gen b := \{ {\sf Comp}({\sf BumpRuns}(C)) : C \in {\rm GSS}(a,b) \}.
\]
\end{definition}

\begin{theorem}\label{thm:structure_constants_for_glide polynomials}
For weak compositions $a$ and $b$ of length $n$, we have
\[
\glide_a \glide_b = \sum_c \beta^{|c|-|a|-|b|} g_{a,b}^c \glide_c,
\]
where $g_{a,b}^c$ denotes the multiplicity of $c$ in the glide product $a \shuffle_\gen b$.
\end{theorem}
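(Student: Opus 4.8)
The plan is to prove the identity by a weight-preserving bijection at the level of monomials, after first removing $\beta$ by a degree argument. For any glide $c$ of a weak composition $a$ one has $|c| = |a| + \ex(c)$, so every monomial $\beta^k x^m$ appearing in $\glide_a \glide_b$ satisfies $k = |m| - |a| - |b|$. The same relation holds term-by-term on the right-hand side: a glide $e$ of $c := {\sf Comp}({\sf BumpRuns}(C))$ contributes $\beta^{|c|-|a|-|b|}\cdot\beta^{\ex(e)}x^e$ with $|e| = |c| + \ex(e)$, so its $\beta$-degree is again $|e| - |a| - |b|$. Thus the $\beta$-grading is determined by the $x$-degree on both sides, the two sides are ``diagonal'' in this sense, and the claimed identity in ${\rm Poly}_n[\beta]$ is equivalent to its specialization at $\beta = 1$. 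It therefore suffices to prove, as an identity of ordinary polynomials,
\[
\glide^{(1)}_a \glide^{(1)}_b = \sum_{C \in {\rm GSS}(a,b)} \glide^{(1)}_{{\sf Comp}({\sf BumpRuns}(C))}.
\]

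The first main step is a word model for a single glide polynomial. Encoding $a$ by the word $A$ as in the definition of ${\rm GSS}$, I would show that $\glide^{(1)}_a = \sum_{D} x^{{\sf Comp}({\sf BumpRuns}(D))}$, where $D$ ranges over the words in $\mathcal{A}^\gen$ with genotype $A^\gen$ obeying the superscript and non-consecutiveness conditions of $\shuffle_\gen$ and satisfying ${\sf Comp}_\mathcal{A}(G) \geq a$ for every genotype $G$ of ${\sf Runs}(D)$ (the single-factor specialization of the genomic shuffle set). The heart of this step is an explicit bijection between these words $D$ and the glides of $a$: the run structure of $D$ records into which variable each letter of $A$ is placed, realizing the leftward shifting and splitting of entries, while the superscripted repetitions $i^j$ with $j \geq 2$ encode exactly the red entries, i.e.\ the $K$-theoretic doublings; the dominance condition on genotypes is precisely the finite-variable incarnation of the structural constraints built into the definition of a glide.

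With this model in hand, $\glide^{(1)}_a\glide^{(1)}_b$ becomes a sum over pairs $(D, D')$, where $D$ is a single-factor word for $a$ and $D'$ one for $b$ on the disjoint odd and even alphabets, each pair contributing the monomial $x^{{\sf Comp}({\sf BumpRuns}(D)) + {\sf Comp}({\sf BumpRuns}(D'))}$. The second main step is to reorganize this double sum by genomically shuffling the underlying plain words $A$ and $B$, tracking the doublings of the two factors together with the new cross-doublings through the superscripts and genotypes, and then reading ${\sf Runs}$ and applying ${\sf BumpRuns}$. In one direction, any $C \in A \shuffle_\gen B$ all of whose genotypes $G$ satisfy ${\sf Comp}_\mathcal{A}(G)\geq a$ and ${\sf Comp}_\mathcal{B}(G)\geq b$ lies in ${\rm GSS}(a,b)$, and splitting each genotype into its odd and even letters recovers a compatible pair of single-factor words, since the two one-sided dominance conditions are exactly the conjunction encoded by ${\rm GSS}(a,b)$. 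In the other direction, a choice of how the runs of $C$ distribute back into an $a$-glide and a $b$-glide recovers a pair $(D, D')$ together with a monomial of $\glide^{(1)}_{{\sf Comp}({\sf BumpRuns}(C))}$; the genotype conditions are the mechanism that resolves the ambiguity arising when doublings from the two factors collide inside a common run.

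The step I expect to be the main obstacle is this last one: verifying that ${\sf BumpRuns}$ together with the all-genotypes dominance conditions is exactly calibrated to the finitely-many-variables glide structure, neither over- nor under-counting, and reconciling the doublings of the individual factors with those created by the shuffle. In infinitely many variables the statement degenerates, via Theorem~\ref{thm:stable_limit}, to the multishuffle product of the multi-fundamental quasisymmetric functions of T.~Lam--P.~Pylyavskyy, where no dominance conditions are present; the genuinely new point is that restricting to $x_1,\dots,x_n$ forces both the insertion of empty runs by ${\sf BumpRuns}$ and the genotype-wise dominance tests. To control these I would reduce the infinitely many genotype conditions to a single distinguished ``worst'' genotype --- the one preserving the rightmost instance of each letter, as in the Remark preceding the definition of the glide product --- thereby making the dominance tests checkable and the inverse map explicit.
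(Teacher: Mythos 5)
Your reduction to $\beta=1$ is correct and in fact more carefully justified than the paper's one-line remark: since every glide $b$ of $a$ satisfies $|b|=|a|+\ex(b)$, both sides of the identity are supported on monomials $\beta^k x^m$ with $k=|m|-|a|-|b|$, so the specialization $\beta=1$ determines the full identity. The overall architecture you propose (a word model for monomials, then a bijection organized by the genomic shuffle) is also the paper's. However, your first main step is false as stated. Take $a=(0,1)$ with $n=2$, so $A=1$ and $A^\gen=1^1$. The only word in $\mathcal{A}^\gen$ with genotype $A$ satisfying the non-consecutiveness condition is $D=1^1$, and ${\sf BumpRuns}$ deterministically pads ${\sf Runs}(D)=(1^1)$ to the dominance-minimal $(\epsilon,1^1)$, so your right-hand side is the single monomial $x_2$, whereas $\glide^{(1)}_{(0,1)}=x_1+x_2+x_1x_2$. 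The defect is structural: a plain word in $\mathcal{A}^\gen$ records only the relative block structure of a glide (a strong composition), and ${\sf BumpRuns}$ selects one canonical placement of the zero entries; it cannot distinguish the different positions in $x_1,\dots,x_n$ into which the nonzero parts of a glide may slide, so the sum over such $D$ drastically undercounts monomials. The paper's remedy is to attach bars to $C$ marking the (possibly empty) runs of ${\sf BumpRuns}(C)$ and to close up under the operations of moving bars rightward and replicating letters; it is the resulting \emph{set} ${\sf shift}(\overline{C})$, not the single word $C$, whose monomials comprise $\glide^{(1)}_{{\sf Comp}({\sf BumpRuns}(C))}$.

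The second step, which you correctly identify as the main obstacle, is where the actual content lies, and your proposal leaves it unproven: you assert that "a choice of how the runs of $C$ distribute back into an $a$-glide and a $b$-glide recovers a pair $(D,D')$," but you give neither the map nor an argument that it is bijective and weight-preserving. The paper resolves this by exhibiting an explicit bijection between the set $\overline{{\rm GSS}(a,b)}=\bigcup_{C}{\sf shift}(\overline{C})$ of barred, shifted words and the set $M(a,b)$ of pairs $(a',b')$ with $a'$ a glide of $a$ and $b'$ a glide of $b$: one reads off the $\mathcal{A}^\gen$- and $\mathcal{B}^\gen$-compositions of the bar-delimited segments, coloring an entry red exactly when a letter of that segment already occurred in an earlier segment, and the inverse reassembles the word run by run. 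The theorem then follows because this bijection respects the partition of $\overline{{\rm GSS}(a,b)}$ into the pieces ${\sf shift}(\overline{C})$. Without either the corrected single-factor model or this explicit bijection, your outline does not yet constitute a proof.
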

\begin{proof}
For simplicity, we explicitly prove the theorem for the specialization $\beta=1$. It is clear that if the theorem is true for $\beta=1$, then it is true for general $\beta$.

Given a word $C\in {\rm GSS}(a,b)$, let $\overline{C}$ be the word in the alphabet $\mathbb{Z}_{>0}^\gen \cup \{ | \}$ obtained by inserting $|$'s into $C$ to separate the elements of ${\sf BumpRuns}(C)$.
For example, let $C = 3^1 3^2 6^1 1^1 2^1$ from ${\rm GSS}(021,101)$ in Example~\ref{ex:genomic_shuffle_set}. Then ${\sf Runs}(C) = (3^1 3^2 6^1, 1^1 2^1)$, and ${\sf BumpRuns}(C) = (3^1 3^2 6^1, \epsilon, 1^1 2^1)$. Hence $\overline{C} = 3^1 3^2 6^1 || 1^1 2^1$, where the $|$'s reflect the locations of the commas in ${\sf BumpRuns}(C)$. 

Let ${\sf shift}(\overline{C})$ denote the set of all words that can be formed from $\overline{C}$ by optionally replacing any letter from $\mathbb{Z}_{>0}^\gen$ with a nonempty string of copies of that letter, and by moving $|$'s to the right, such that $i^j < k^\ell$ whenever $i^j$ and $k^\ell$ are consecutive.

For example, if $\overline{C} = 3^1 3^2 6^1 || 1^1 2^1$,  then the elements of ${\sf shift}(\overline{C})$ are

$\begin{array}{ccccc}
3^1 3^2 6^1 || 1^1 2^1 & 3^1 3^2 6^1 | 1^1 | 2^1 & 3^1 3^2 6^1 | 1^1 2^1 | & 3^1 3^2 6^1 | 1^1 | 1^1 2^1 & 3^1 3^2 6^1 | 1^1 2^1 | 2^1. \\
\end{array}$

Define a set \[\overline{{\rm GSS}(a,b)} := \bigcup_{C \in {\rm GSS}(a,b)} {\sf shift}(\overline{C}) .\] 
Let $M(a,b)$ denote the set of ordered pairs $(a', b')$ of weak kompositions such that $a'$ is a glide of $a$ and $b'$ is a glide of $b$. Then the elements of $M(a,b)$ obviously correspond to the monomials in the product $\glide^{(1)}_a \glide^{(1)}_b$. We claim a bijection between $\overline{{\rm GSS}(a,b)}$ and $M(a,b)$; in particular, the elements of $\overline{{\rm GSS}(a,b)}$ represent the monomials appearing in $\glide^{(1)}_a \glide^{(1)}_b$.

Given an element $D\in \overline{{\rm GSS}(a,b)}$, let ${\sf Seq}(D)$ be the sequence of maximal consecutive subwords in the alphabet $\mathbb{Z}_{>0}^\gen$.
One then recovers an element $(a', b' ) \in M(a,b)$ by \[(a', b' ) = \Big( {\sf Comp}_{\mathcal{A}^\gen}\big({\sf Seq}(D)\big), {\sf Comp}_{\mathcal{B}^\gen}\big({\sf Seq}(D)\big) \Big),\] where we color $a_i'$ (respectively, $b_i'$) red if and only if the $i$th element of ${\sf Seq}(D)$ contains a letter $i^j \in \mathcal{A}^\gen$ (respectively, $i^j \in \mathcal{B}^\gen$) that also appears in a previous element of ${\sf Seq}(D)$. 

For example, $3^1 3^2 6^1 || 1^1 2^1$ maps to $((2,0,1),  (1,0,1))$, and $3^1 3^2 6^1 | 1^1 | 1^1 2^1$ maps to $((2,1,{\color{red}1}), (1,0,1))$. 


Given an element $(a', b') \in M(a,b)$, create $D\in \overline{{\rm GSS}(a,b)}$ as follows. The first run of $D$ is the first $a'_1$ letters of $A^\gen$ followed by the first $b'_1$ letters of $B^\gen$, sorted into increasing order, then the second run is the next $a'_2$ letters of $A^\gen$ followed by the next $b'_2$ letters of $B^\gen$, sorted into increasing order, etc, with the exception that whenever you see a red entry in $a'$ (respectively $b'$), the corresponding run of $D$ has a copy of the most-recently placed letter of $A^\gen$ (respectively $B^\gen$).  

For example, if $A^\gen = 3^1 3^2 1^1$, $B^\gen = 6^1 2^1$, then $(a',b')=((2,1,0),(1,1,{\color{red}1}))$ maps to $3^1 3^2 6^1 | 1^1 2^1 | 2^1$.

It is clear that these two maps are mutually inverse. Hence the elements of $\overline{{\rm GSS}(a,b)}$ represent the set of monomials in $\glide^{(1)}_a \glide^{(1)}_b$. By construction, for any $C\in {\rm GSS}(a,b)$, the monomials associated to the elements of ${\sf shift}(\overline{C})$ together comprise the glide polynomial $\glide^{(1)}_{{\sf Comp}({\sf BumpRuns}(C))}$. Continuing the running example of $C = 3^1 3^2 6^1 1^1 2^1$, the monomials corresponding to elements of ${\sf shift}(\overline{C})$ are $\mathbf{x}^{302}, \mathbf{x}^{311}, \mathbf{x}^{320}, \mathbf{x}^{312}, \mathbf{x}^{321}$ and their sum is the glide polynomial $\glide^{(1)}_{302}$ corresponding to $C$.

Hence the elements of $\overline{{\rm GSS}(a,b)}$ are partitioned by the elements of ${\rm GSS}(a,b)$, with the sum of the monomials in each part equal to the appropriate glide polynomial.
\end{proof}

We can use Theorem~\ref{thm:structure_constants_for_glide polynomials} to better understand ${\rm GSS}(a,b)$ and the glide polynomials appearing in the product $\glide_a\glide_b$.
For a weak composition $a$, let $z(a)$ denote the number of zeros in $a$ that precede a nonzero entry.

\begin{corollary}\label{cor:GSS_max_length}
If $\glide_c$ appears in the glide expansion of $\glide_a \glide_b$, then 
\[|c| \le |a| + |b| + z(a)+z(b).\]
Moreover, if $\glide_a$ and $\glide_b$ use the same number of variables, then this bound is attained by some glide polynomial $\glide_d$ in the glide expansion of $\glide_a \glide_b$.
\end{corollary}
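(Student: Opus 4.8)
The plan is to bound the length of $c$ by analyzing the combinatorial operation $\mathsf{BumpRuns}$ introduced in the construction preceding Theorem~\ref{thm:structure_constants_for_glide polynomials}. Recall that every glide polynomial $\glide_c$ in the expansion of $\glide_a\glide_b$ arises as $c = \mathsf{Comp}(\mathsf{BumpRuns}(C))$ for some $C \in \mathrm{GSS}(a,b)$, and that $\ell(c)$ equals the number of runs (including the inserted empty runs) in $\mathsf{BumpRuns}(C)$. Since $|c|$ is simply the total number of letters in $C$, and each letter of $C$ contributes a factor of $\beta$ (via excess) exactly when it is a repeated copy of a previous letter, I would track how many repeated letters can appear. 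First I would observe that $|c| = |a| + |b| + (\text{number of repeated letters in } C)$, so the task reduces to bounding the number of repetitions.

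The key step is to understand where repetitions come from. A letter $i^j$ repeats in $C$ only because it was ``doubled'' during a northeast shift, and in the dictionary of the proof of Theorem~\ref{thm:structure_constants_for_glide polynomials} this corresponds exactly to a \emph{red} entry in the glide $a'$ of $a$ (or $b'$ of $b$). A nonzero entry of the glide can be colored red only at a position where a shift or split crossed a zero of the original composition; more precisely, the excess of a glide of $a$ is bounded by the number of zeros of $a$ that precede a nonzero entry, which is precisely $z(a)$. Thus I would argue that the total number of red symbols contributed from the $A$-alphabet is at most $z(a)$, and from the $B$-alphabet at most $z(b)$, giving
\[
|c| \le |a| + |b| + z(a) + z(b).
\]
I expect the cleanest route is to directly bound the maximal excess of any glide of a weak composition $a$ by $z(a)$: a glide may split or merge entries, but it may only color an entry red when that entry has been shifted left past a zero position, and each leading-zero-before-a-nonzero accounts for at most one red coloring in the worst case.

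For the second (sharpness) assertion, under the hypothesis that $\glide_a$ and $\glide_b$ use the same number of variables $n$, I would exhibit an explicit $C \in \mathrm{GSS}(a,b)$ achieving the bound. The natural candidate is to take the glides $a'$ and $b'$ of $a$ and $b$ that maximize the number of red entries simultaneously---pushing every available zero-crossing to produce a red-colored repetition---and then read off the corresponding word $D \in \overline{\mathrm{GSS}(a,b)}$ and its underlying $C$ via the inverse bijection of the proof. Because $a$ and $b$ have the same length, one can interleave the doubled letters of $A^{\gen}$ and $B^{\gen}$ so that the dominance conditions defining $\mathrm{GSS}(a,b)$ are still satisfied, ensuring that $C$ genuinely lies in $\mathrm{GSS}(a,b)$ and that $\mathsf{BumpRuns}$ does not insert runs that reduce $|c|$. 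The resulting $d = \mathsf{Comp}(\mathsf{BumpRuns}(C))$ then has $|d| = |a|+|b|+z(a)+z(b)$.

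The main obstacle I anticipate is the sharpness direction: one must verify that the maximally-red configuration actually produces a valid element of $\mathrm{GSS}(a,b)$, i.e.\ that the dominance conditions $\mathsf{Comp}_{\mathcal{A}}(G)\ge a$ and $\mathsf{Comp}_{\mathcal{B}}(G)\ge b$ survive on every genotype even after all the doublings. The equal-length hypothesis is what makes this possible, since it guarantees enough room to align the repeated odd- and even-alphabet letters without violating the increasing-run and dominance constraints; I would make this precise by choosing the interleaving greedily from the right, as in the ``worst genotype'' $\hat{G}$ described in the remark following the definition of $\mathrm{GSS}(a,b)$, and checking that $\hat{G}$ still dominates $a$ and $b$ coordinatewise.
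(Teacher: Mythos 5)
Your upper bound argument is essentially the paper's: both reduce to the observation that the excess of any glide of $a$ is at most $z(a)$, so the top-degree monomial of $\glide_a$ has degree $|a|+z(a)$, and hence $|c|$, being the minimal degree of a monomial of $\glide_c$ and therefore at most the maximal degree of a monomial of $\glide_a\glide_b$, is at most $|a|+|b|+z(a)+z(b)$. Your accounting via repeated letters of $C$ is a correct equivalent formulation, and your justification that the excess is bounded by $z(a)$ (one red entry per zero preceding a nonzero entry) is sound; this half is fine.

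The sharpness half follows the paper's construction (map a maximally-red pair $(a',b')$ through the bijection of Theorem~\ref{thm:structure_constants_for_glide polynomials}), but your verification plan targets the wrong condition and leaves the real one unaddressed. The dominance conditions on genotypes that you flag as the main obstacle are automatic: the bijection sends \emph{every} pair in $M(a,b)$ into $\overline{{\rm GSS}(a,b)}$, so the image $D$ already lies in ${\sf shift}(\overline{C})$ for some $C\in{\rm GSS}(a,b)$ satisfying dominance. What actually needs proof is that no letter of $D$ is a ${\sf shift}$-duplicate of an adjacent copy and that no bar of $D$ interrupts an increasing run --- otherwise the underlying $C$ is strictly shorter than $D$ and the corresponding $|d|$ falls below the bound. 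This is where the equal-number-of-variables hypothesis enters, and you have not used it where it matters: it lets you strip trailing zeros from both $a$ and $b$ simultaneously, after which a glide with the maximal number $z(a)$ of red entries necessarily has \emph{no zero entries at all}. That fact is the engine of the paper's short contradiction argument (two adjacent equal letters or a bar inside an ascent would force some $a'_i$ or $b'_i$ to vanish, contradicting that the letters of $A^{\gen}$ strictly decrease from right to left within each alphabet). Your ``greedy interleaving from the right'' via the worst genotype $\hat{G}$ does not substitute for this observation, so as written the attainment claim is not established.
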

\begin{proof}
By Theorem~\ref{thm:structure_constants_for_glide polynomials}, the length of an element of ${\rm GSS}(a,b)$ is the degree of the lowest-degree monomial of the corresponding glide polynomial. This degree is bounded above by the maximum possible degree of a monomial appearing in the product $\glide_a\glide_b$, i.e., the sum of the highest degrees of monomials in $\glide_a$ and $\glide_b$. These highest-degree monomials arise from glides of $a$ and of $b$ with as many red entries as possible. Since the number of red entries in a glide of $a$ is clearly at most $z(a)$, the greatest possible degree of a glide of $a$ is $|a| + z(a)$. The analogous statement holds for glides of $b$.

To see the bound is attained, first note that if $\glide_a$ and $\glide_b$ use the same number of variables then we may suppose that neither $a$ nor $b$ have trailing zeros (by deleting trailing zeros of $a$ and $b$ if necessary). Suppose we have a glide $a'$ of $a$ and a glide $b'$ of $b$, each with as many red entries as possible. Then both $a'$ and $b'$ must have no zero entries at all. Let $D \in \overline{{\rm GSS}(a,b)}$ be the image of $(a', b')$ under the map from $M(a,b)$ to $\overline{{\rm GSS}(a,b)}$ given in the proof of Theorem~\ref{thm:structure_constants_for_glide polynomials}. We claim that in fact, $D \in {\rm GSS}(a,b)$. Suppose for a contradiction that $D$ has two adjacent copies of the same letter; without loss of generality, we have $b'_i$ is black and $b'_{i+1}$ is red, the letters of $A^\gen$ in the $i$th run of $D$ are smaller than the letters of $B^\gen$ in this run, and the letters of $A^\gen$ in the $(i+1)$th run of $D$ are larger than the letters of $B^\gen$ in the $(i+1)$th run. But this is impossible since $a'_i$ and $a'_{i+1}$ are both nonzero, and letters of $A^\gen$ decrease from right to left. Therefore $D$ does not have two adjacent copies of the same letter. Moreover, $D$ cannot have a bar between an ascent, since clearly that would require some $a'_i$ or $b'_i$ to be zero.
Thus $D \in {\rm GSS}(a,b)$, and so $\glide_d$ appears in the product $\glide_a\glide_b$, where $d = {\sf Comp}({\sf BumpRuns}(D))$.
\end{proof}


\begin{corollary}\label{corollary:interval}
If $\glide_c$ appears in the glide expansion of $\glide_a \glide_b$ and $|c| > |a| + |b|$, then there is a glide polynomial $\glide_d$ in the glide expansion of $\glide_a \glide_b$ with $|d| = |c| - 1$.
\end{corollary}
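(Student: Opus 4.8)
\emph{Approach.} Following the strategy of Corollary~\ref{cor:GSS_max_length}, I would argue entirely inside the combinatorial model of Theorem~\ref{thm:structure_constants_for_glide polynomials}. There the glide polynomials occurring in $\glide_a\glide_b$ are indexed by the words $C \in {\rm GSS}(a,b)$, and the degree of the associated glide $\glide_{{\sf Comp}({\sf BumpRuns}(C))}$ equals the length of $C$ (since ${\sf BumpRuns}$ only inserts empty words, leaving the total letter count unchanged). So the statement is equivalent to the following length-reduction claim: \emph{if some $C \in {\rm GSS}(a,b)$ has length $L > |a|+|b|$, then some $C' \in {\rm GSS}(a,b)$ has length $L-1$.} The plan is to build $C'$ by deleting one carefully chosen letter of $C$.

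\emph{Setup.} First I would observe that $L > |a|+|b|$ forces a repeated symbol. Every genotype of $C$ is a shuffle of $A$ and $B$, hence has length exactly $|a|+|b|$; since a genotype retains one occurrence of each of the $|a|+|b|$ distinct symbols of $A^\gen$ and $B^\gen$ (all of which must occur in $C$), the quantity $L-(|a|+|b|)$ counts the total number of extra copies. Thus $L > |a|+|b|$ means some symbol $i^j$ occurs at least twice. The key structural point is that deleting any \emph{non-rightmost} occurrence of $i^j$ automatically preserves two of the three conditions defining $A \shuffle_\gen B$: superscript-monotonicity survives because we only delete letters, and the genotype-shuffle condition survives because $C'$ is a subword of $C$ retaining every symbol, so every genotype of $C'$ arises from the same selection of occurrences inside $C$ and is therefore a genotype of $C$, hence a shuffle. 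Consequently only the no-two-consecutive-equal condition and the dominance conditions require real attention.

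\emph{The safe deletion.} To control both remaining conditions at once, I would delete a non-rightmost occurrence of a repeated symbol that is \emph{interior} to its maximal increasing run, i.e.\ neither its first nor last letter. Deleting an interior letter leaves every run boundary intact, so no two runs merge and no two equal letters become adjacent; moreover, being non-rightmost, the deleted copy is absent from the worst-case genotype $\hat G$ described in the Remark following Corollary~\ref{cor:GSS_max_length}. Since the run structure is unchanged, $\hat G(C') = \hat G(C)$, and so the dominance conditions ${\sf Comp}_\mathcal{A}(\hat G) \geq a$ and ${\sf Comp}_\mathcal{B}(\hat G) \geq b$ are inherited by $C'$. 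This yields $C' \in {\rm GSS}(a,b)$ of length $L-1$, and hence a glide $\glide_d$ with $|d| = L-1 = |c|-1$ occurring in $\glide_a\glide_b$.

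\emph{Main obstacle.} The crux is therefore guaranteeing that a repeated symbol always admits an interior non-rightmost occurrence, or else handling the leftover case. The worry is a configuration in which every non-rightmost occurrence lies at a run boundary---for instance a repeated symbol that is the sole letter of its run---so that deleting it merges adjacent runs and disturbs the grouping recorded by $\hat G$, changing the dominance bookkeeping in a way not controlled by the argument above. I expect this bad case to be precluded, or else rendered harmless, by the dominance conditions themselves: a run-terminal non-rightmost occurrence tends to force a genotype with an empty or deficient run, which would already violate ${\sf Comp}_\mathcal{A}(\hat G) \geq a$ or ${\sf Comp}_\mathcal{B}(\hat G) \geq b$ and so keep $C$ out of ${\rm GSS}(a,b)$ to begin with. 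Making this precise---either by proving that every $C \in {\rm GSS}(a,b)$ of positive excess has an interior non-rightmost occurrence, or by directly verifying that the merge produced by a run-terminal deletion leaves all dominance inequalities intact after re-padding via ${\sf BumpRuns}$---is where I would concentrate the effort.
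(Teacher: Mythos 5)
Your reduction to a length-reduction claim inside ${\rm GSS}(a,b)$ is exactly the paper's framework, and your preliminary observations are sound: the degree of $\glide_{{\sf Comp}({\sf BumpRuns}(C))}$ is the length of $C$, a length exceeding $|a|+|b|$ forces a repeated symbol, and deleting an occurrence preserves both superscript-monotonicity and the genotype-shuffle condition (genotypes of the subword are genotypes of $C$). The gap is in your ``safe deletion'': a non-rightmost occurrence of a repeated symbol that is \emph{interior} to its run need not exist. For a concrete instance, $3^1 3^2 6^1 1^1 2^1 1^1 2^1 \in {\rm GSS}(021,101)$ from Example~\ref{ex:genomic_shuffle_set} has run structure $(3^1 3^2 6^1,\; 1^1 2^1,\; 1^1 2^1)$; its repeated symbols are $1^1$ and $2^1$, whose non-rightmost occurrences sit at the first and last positions of the middle run, so your rule never fires, and you explicitly leave the fallback case unresolved. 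That unresolved case is the whole content of the corollary.

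The paper closes this by making the opposite choice: delete the \emph{rightmost} letter of $C$ that is a repeat. Such a letter always exists once there is any repetition; it cannot create two adjacent equal letters, since if deleting position $p$ left equal letters at $p-1$ and $p+1$, the letter at $p+1$ would itself be a repeat lying farther right, a contradiction; and all conditions defining ${\rm GSS}(a,b)$ survive because the genotypes of the shortened word are a subset of the genotypes of $C$, all of which already satisfy them. Note also that the worry driving your insistence on interior deletions is vacuous: if adjacent runs merge, letters only move into earlier parts of ${\sf Comp}_{\mathcal{A}}$ and ${\sf Comp}_{\mathcal{B}}$, which can only increase partial sums and hence preserves the dominance inequalities $\geq a$ and $\geq b$ (and ${\sf BumpRuns}$ re-pads afterwards in any case). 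With that observation either your fallback route or the paper's rightmost-repeat deletion completes the argument, but as written the proposal does not.
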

\begin{proof}
By Theorem~\ref{thm:structure_constants_for_glide polynomials}, there is a $C \in {\rm GSS}(a,b)$ corresponding to the weak komposition $c$. Since $|c| > |a| + |b|$, $C$ has at least one letter appearing more than once. Let $D$ be the subword formed from $C$ by deleting the rightmost letter of $C$ that is a repeat. Note that $D \in A \shuffle_\gen B$. Since the set of genotypes of $D$ is a \emph{subset} of the set of genotypes of $C$ and $C \in {\rm GSS}(a,b)$,  all genotypes of $D$ satisfy the dominance conditions. Thus $D \in {\rm GSS}(a,b)$. The corollary follows by taking $d = {\sf Comp}({\sf BumpRuns}(D))$.
\end{proof}



Theorem~\ref{thm:structure_constants_for_glide polynomials} and the positive combinatorial expansion of a Grothendieck polynomial in the glide basis (Theorem~\ref{thm:glide_expansion}) together yield a positive Littlewood-Richardson rule for the expansion of a product of Grothendieck polynomials in the glide basis. For a permutation $w$, let $\inv(w)$ denote the number of inversions of $w$.

\begin{theorem}
  For a weak composition $a$ and permutations $u$ and $v$, we have
\[    \cgroth_u \cgroth_v = \sum_{a} \beta^{|a|-\inv(u)-\inv(v)} c_{u,v}^{a} \glide_{a},\]
  where
\[    c_{u,v}^{a} = \!\!\!\!\!\!\!\!\!\! \sum_{(P,Q) \in \QPD(u) \times \QPD(v)} \!\!\!\!\! \!\!\!\!\! g_{\wt(P), \wt(Q)}^a.\]
\end{theorem}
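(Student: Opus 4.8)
The plan is to combine the two main structural results already in hand: the glide expansion of a single $\beta$-Grothendieck polynomial (Theorem~\ref{thm:glide_expansion}) and the glide product formula (Theorem~\ref{thm:structure_constants_for_glide polynomials}). First I would apply Theorem~\ref{thm:glide_expansion} to each factor, writing
\[
\cgroth_u = \sum_{P \in \QPD(u)} \beta^{\ex(P)} \glide_{\wt(P)}, \qquad \cgroth_v = \sum_{Q \in \QPD(v)} \beta^{\ex(Q)} \glide_{\wt(Q)}.
\]
Multiplying and distributing over the two sums immediately gives
\[
\cgroth_u \cgroth_v = \sum_{(P,Q) \in \QPD(u) \times \QPD(v)} \beta^{\ex(P)+\ex(Q)} \glide_{\wt(P)} \glide_{\wt(Q)}.
\]

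Next I would invoke Theorem~\ref{thm:structure_constants_for_glide polynomials} to expand each inner product in the glide basis, namely
\[
\glide_{\wt(P)} \glide_{\wt(Q)} = \sum_{a} \beta^{|a|-|\wt(P)|-|\wt(Q)|} g_{\wt(P),\wt(Q)}^a \, \glide_a.
\]
Substituting this into the previous display and collecting the terms with a fixed target composition $a$ produces the claimed coefficient $c_{u,v}^a = \sum_{(P,Q)} g_{\wt(P),\wt(Q)}^a$, provided one checks that every contributing monomial carries exactly the power $\beta^{|a|-\inv(u)-\inv(v)}$. This exponent bookkeeping is the only point of real content. Accumulating the two sources of $\beta$ gives total exponent $\ex(P)+\ex(Q)+|a|-|\wt(P)|-|\wt(Q)|$, so it suffices to verify the identity $|\wt(P)| - \ex(P) = \inv(u)$ for every $P \in \QPD(u)$, together with its analogue for $Q$. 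This follows directly from the definitions: $|\wt(P)|$ is the total number of $\cross$'s in $P$, while $\ex(P)$ is by definition that number minus the number of $\cross$'s in $\reduct(P)$; hence $|\wt(P)| - \ex(P)$ equals the number of $\cross$'s in the reduced pipe dream $\reduct(P)$, which is precisely the Coxeter length $\ell(u)$, equal to $\inv(u)$. Substituting these identities collapses the total exponent to $|a| - \inv(u) - \inv(v)$, as required.

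I expect no serious obstacle here: the statement is an essentially formal consequence of the two cited theorems combined with the standard fact that a reduced pipe dream for $u$ has exactly $\inv(u)$ crossings. The one step that demands care is organizing the $\beta$-exponents so that the reduction-crossing identity can be applied cleanly, and in particular being careful that the quantity $|\wt(P)| - \ex(P)$ is constant across all $P \in \QPD(u)$ (as it must be, since every $P \in \PD(u)$ reduces to a pipe dream with $\inv(u)$ crossings). Once this constancy is noted, the two sums factor through the glide product formula and the result is immediate.
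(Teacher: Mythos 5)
Your proof is correct and follows exactly the route the paper takes, which simply declares the result ``immediate'' from Theorems~\ref{thm:glide_expansion} and~\ref{thm:structure_constants_for_glide polynomials}; you have merely spelled out the $\beta$-exponent bookkeeping via the identity $|\wt(P)|-\ex(P)=\inv(u)$, which is the right (and only) detail to check.
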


\begin{proof}
Immediate from Theorems~\ref{thm:glide_expansion} and \ref{thm:structure_constants_for_glide polynomials}.
\end{proof}


%

\section*{Acknowledgements}
We thank Cara Monical for sharing helpful code for computing with Grothendieck polynomials, and Travis Scrimshaw for programming support. We thank Becky Patrias for useful conversation about \cite{Lam.Pylyavskyy}.

OP was partially supported by an NSF Graduate Research Fellowship.

%
%

\bibliographystyle{amsalpha} 
\bibliography{grothenslide.bib}

\end{document}